\newtheorem{theorem}{Theorem}[section]
\newtheorem{lemma}[theorem]{Lemma}
\newtheorem{prop}[theorem]{Proposition}
\theoremstyle{definition}
\theoremstyle{remark}
\numberwithin{equation}{section}
\newcommand{\blankbox}[2]{%
\parbox{\columnwidth}{\centering
}%
}
\title{Semigroups of real functions with dense orbits}
\author{ Mohammad Javaheri \\
Department of Mathematics\\
 Trinity College \\ Hartford, CT 06106
\\ \small{Mohammad.Javaheri@trincoll.edu}  
}
\begin{document}

\maketitle

\begin{abstract}
Let ${\mathcal F}_I=\left \{f:I \rightarrow I|~ f(x)= (Ax+B)/(Cx+D);~AD-BC \neq 0  \right \}$, where $I$ is an interval. For $x\in I$, let ${\Omega}_x$ be the orbit of $x$ under the action of the semigroup of functions generated by $f,g \in {\mathcal F}_I$. Our main result in this paper is to describe all $f,g \in {\mathcal F}_I$ such that $\Omega_x$ is dense in $I$ for all $x$. 
\end{abstract}

\section{Introduction}

In a one-dimensional dynamical system, one is concerned with the denseness of orbits (hypercyclicity) and periodic points of a single map on a one-dimensional manifold such as an interval. More generally, if $G$ is a semigroup of functions on an interval $I$, then we would like to study the denseness of the $G$-orbit of $x\in I$, which is defined as $\Omega_x=\{f(x): f \in G \}$. If $\Omega_x$ is dense in $I$ for some $x\in I$, we say $G$ is hypercyclic. 

Several authors have studied hypercyclic continuous semigroups of bounded linear operators on Banach spaces; see\cite{CMP,DSW, K} and the survey article \cite{BMP}. In this paper, we study the hypercyclicity of the semigroups generated by two functions from the following set of functions:
\begin{equation}
{\mathcal F}_I=\left \{f:I \rightarrow I: f(x)= { {Ax+B} \over {Cx+D}};~AD-BC \neq 0  \right \}~,
\end{equation}
where $I \subset \mathbb{R}$ is an interval (possibly infinite). A single map in $\mathcal F$ has a simple dynamical system: the orbit of every $x$ is either periodic of order at most 2 or converging to a fixed point, hence our interest in semigroups generated by a pair of functions from ${\mathcal F}_I$. 

An important example of a hypercyclic pair, which is related to continued fractions, is the pair of maps:
$$f(x)=x+1~,~g(x)= {1 \over x}  ~.$$
Note that the orbit of 1 is the set of all positive rational numbers. Another example is the pair of functions $f(x)=ax$ and $g(x)=bx+c$ on $(0,\infty)$, where $b>1>a$ and $c>0$ (see \cite{BMS} or \cite{J1}). In general, it is simple to construct examples of pairs of functions where every orbit of the semigroup generated by them is dense. The following theorem (which will be proved in section 2) can be used to construct such examples. In the sequel, we call a function $f: I \rightarrow I$ length-decreasing, if $|f(J)|<|J|$ for every nonempty subinterval $J \subseteq I$, where $|J|$ means the length of the interval $J$. Also $Im(f)$ means the image of $f$. 

\begin{theorem}\label{general}
Let $\{f_i:I \rightarrow I|~ i \in \Lambda \}$ be a set of length-decreasing functions on a closed finite interval $I$, where $\Lambda$ is some (possibly infinite) index set. Suppose that for each $i \in \Lambda$, the global maximum and minimum values of $f$ on $I$ occur at the end points of $I$. Moreover, suppose that: 
\begin{equation}\label{unioncond}
\bigcup_{i\in \Lambda} Im(f_i)=I~.
\end{equation}
Then the orbit of every $x\in I$ under the action of the semigroup generated by the $f_i$'s, $i\in \Lambda$, is dense in $I$.
\end{theorem}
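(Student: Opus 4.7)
The plan is to suppose for contradiction that $C := \overline{\Omega_x}$ is a proper subset of $I$ and to derive a contradiction using the length-decreasing hypothesis. By continuity of each $f_i$ together with $f_i(\Omega_x)\subseteq\Omega_x$, one has $f_i(C)\subseteq C$, or equivalently $f_i^{-1}(U)\subseteq U$ for $U := I\setminus C$. The set $U$ is a nonempty open subset of $I$ and therefore decomposes as a disjoint union of countably many maximal open subintervals (``gaps''). Since the gaps are pairwise disjoint and $|I|<\infty$, only finitely many of them exceed any given positive length, so $L := \sup_k |J_k|>0$ is attained by some gap $J^* = (a,b)$. The strategy is to produce a gap of length strictly greater than $L$, contradicting maximality and forcing $C = I$.

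The central step is to find $i\in\Lambda$ such that $J^*\subseteq Im(f_i)$. The extrema-at-endpoints hypothesis guarantees that each $Im(f_i)$ is a closed sub-interval of $I$ with endpoints $f_i(a_0),f_i(b_0)$; assuming additionally that $f_i$ is monotonic on $I$ (which holds automatically for the Möbius functions of interest), the preimage $W := f_i^{-1}(J^*)$ is a single sub-interval of $I$ satisfying $f_i(W) = J^*$. The length-decreasing property then yields $|W|>|J^*|=L$, while $W\subseteq f_i^{-1}(U)\subseteq U$ places $W$ inside some gap of $U$. That gap therefore has length strictly greater than $L$, the required contradiction.

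The principal obstacle is verifying the existence of such an $i$. By the covering hypothesis $\bigcup_i Im(f_i) = I$, the closed sub-intervals $Im(f_i)\cap J^*$ form a cover of $J^*$. If no single $Im(f_i)$ already contains $J^*$, then every $Im(f_i)$ meeting $J^*$ is a proper sub-interval of $J^*$, and at least one of its endpoints $m_i,M_i$ lies in $J^*\subseteq U$; being the $f_i$-image of an endpoint of $I$, this forces the corresponding endpoint of $I$ into $f_i^{-1}(U)\subseteq U$. Applying this reasoning to the $f_j$'s whose images cover the boundary points $a$ and $b$ of $J^*$ (which lie in $C$ whenever they are interior to $I$), both endpoints of $I$ are forced into $U$. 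A careful analysis of the two resulting boundary gaps, combined with a further application of the length-decreasing property to a suitable $f_i$ restricted to one of them, then produces the needed gap of length exceeding $L$. Making this last case analysis airtight---including tracking the monotonicity directions of the various $f_i$'s and ruling out the degenerate configurations that arise when an endpoint of $I$ is mapped to an interior point of $J^*$---is the most delicate part of the argument.
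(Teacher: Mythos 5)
Your overall skeleton coincides with the paper's: pass to a maximal-length gap $J^*$ of $U=I\setminus\overline{\Omega_x}$, find $i$ with $J^*\subseteq Im(f_i)$, pull back, and contradict maximality via the length-decreasing property. The problem is that the step you yourself flag as ``the most delicate part'' is precisely the step you have not supplied, and it is the heart of the proof. The clean route to $J^*\subseteq Im(f_i)$ is to first prove that both endpoints of $I$ lie in $\overline{\Omega_x}$: then each $Im(f_i)$ is an interval whose endpoints $f_i(a_0),f_i(b_0)$ lie in $\overline{\Omega_x}$, hence outside $J^*$, so any $Im(f_i)$ meeting $J^*$ must contain it. The paper proves this endpoint lemma directly: by the covering hypothesis there are $i,j$ with $a_0\in Im(f_i)$ and $b_0\in Im(f_j)$; since extrema occur at endpoints, $a_0$ is $f_i(a_0)$ or $f_i(b_0)$ (and likewise for $b_0$), and in each of the four resulting configurations one exhibits a word ($f_i$, $f_j$, $f_i\circ f_j$, or $f_j\circ f_i$) that fixes an endpoint and is length-decreasing, so its iterates applied to $x$ converge to that endpoint. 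Your alternative --- suppose an endpoint of $I$ falls into $U$ and analyze the resulting ``boundary gaps'' --- is not carried out, and as sketched it is not clear it closes: once both endpoints of $I$ sit in $U$ you no longer know that any $Im(f_i)$ has an endpoint in $\overline{\Omega_x}$, and the sentence promising ``a further application of the length-decreasing property to a suitable $f_i$'' identifies neither that $f_i$ nor the gap it is supposed to enlarge. Until this case is actually closed, the proof is incomplete.

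Two smaller points. Your claim that if $Im(f_i)$ meets $J^*$ without containing it then $Im(f_i)$ is a proper sub-interval of $J^*$ is false as stated; what is true (and what your next sentence seems to use) is only that at least one endpoint of $Im(f_i)$ lies in $J^*$. Also, you add the hypothesis that each $f_i$ is monotonic, which the theorem does not assume; this can be avoided (for a continuous $f_i$ with extrema at the endpoints, an intermediate value argument produces a subinterval of $f_i^{-1}(J^*)$ whose image covers $J^*$), and since the theorem is stated for general length-decreasing maps you should either argue in that generality or state explicitly that you are proving less.
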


The main result of this paper is to describe all hypercyclic pairs of functions from ${\mathcal F}_I$. This will be achieved through Propositions \ref{prop11}, \ref{prop22}, and \ref{prop33}. The following theorem is a more compact but slightly weaker result, since it only deals with the case where neither one of the functions $f$ or $g$ is onto. To state the theorem, we need the following definitions. Every $f\in {\mathcal F}_{I}$ has at most one attracting fixed point (an attracting fixed point of $f$ is a point $\theta \in I$ such that $f^n(x) \rightarrow \theta$ for $x$ near $\theta$, as $n \rightarrow \infty$, where $f^n$ means the composition of $f$ with itself $n$ times). We denote this unique attracting fixed point of $f$ by $o(f)$, if exists (whenever we write $o(f)$ it is implied that it exists). If $I$ is an infinite interval, then we allow $o(f)=\infty$ (which means $f^n(x) \rightarrow \infty$ for $x$ large enough) or $o(f)=-\infty$ (defined similarly). Finally, if $I \subset \mathbb{R}$ is an interval, then $\partial I$ means the end points of $I$, possibly containing the symbols $\infty$ or $-\infty$ if $I$ is an infinite interval. 

\begin{theorem}\label{1}
Let $f,g \in {\mathcal F}_{I}$ such that $f$ and $g$ are not onto, where $I \subset \mathbb{R}$ is an interval (possibly infinite but not $\mathbb{R}$ itself). Then the orbit of a given $x\in I$ is dense in $I$ if and only if $Im(f) \cup Im(g)=I$ and one of the following occurs:
\begin{itemize}
\item[i)] Both $f$ and $g$ are increasing and $\{o(f),o(g)\}=\partial I$.
\item[ii)] Exactly one of $f$ or $g$, say $f$, is increasing and $o(f) \in \partial I$. 
\item[iii)] Both $f$ and $g$ are decreasing and $\{o(fg),o(gf)\}=\partial I$.
\end{itemize}
\end{theorem}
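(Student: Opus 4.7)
The plan is to prove both implications of the equivalence, splitting into the three cases based on the monotonicity of $f$ and $g$. Throughout I use that each $h \in \mathcal{F}_I$ is strictly monotone on $I$ (since $h'(x) = (AD-BC)/(Cx+D)^2$ has constant sign), and that a non-onto $h \in \mathcal{F}_I$ has a well-defined attracting fixed point $o(h) \in \overline{I}$ toward which $h^n(x)$ converges for every $x \in I$.

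For necessity, the inclusion $\Omega_x \setminus \{x\} \subseteq \operatorname{Im}(f) \cup \operatorname{Im}(g)$ together with the fact that $\operatorname{Im}(f) \cup \operatorname{Im}(g)$ is a union of two closed subintervals of $I$ immediately forces $\operatorname{Im}(f) \cup \operatorname{Im}(g) = I$ whenever $\Omega_x$ is dense. The monotonicity trichotomy then gives the three sub-cases, and the fixed-point conditions follow from contradiction arguments: in case (i), if $o(f)$ lay in the interior of $I$, then iterates $f^n$ would collapse $I$ into a neighborhood of $o(f)$ missing at least one endpoint, and since $g$ is also non-onto increasing, tracking the extremes of compositions in $\langle f, g\rangle$ shows that $\overline{\Omega_x}$ would avoid some endpoint of $I$. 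Cases (ii) and (iii) are handled by applying the same kind of argument to the auxiliary increasing maps $g^2$ and $\{fg, gf\}$ respectively, whose attracting fixed points control how the semigroup can approach the boundary of $I$.

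For sufficiency, in each case I build a family $\{w_i\} \subseteq \langle f, g\rangle$ of words whose images cover $I$ and whose individual lengths shrink to zero, and then conclude density via Theorem~\ref{general} (after replacing each $w_i$ by a sufficiently high iterate to guarantee the length-decreasing hypothesis, since non-onto Möbius maps contract uniformly on $I$ once the compositions have sufficient depth). In case (i), the families $\{f^n\}$ and $\{g^n\}$ have images nested toward $o(f), o(g) \in \partial I$; combining this nesting with $\operatorname{Im}(f) \cup \operatorname{Im}(g) = I$ shows that for every $y \in I$ and every $\varepsilon > 0$ a word $w$ can be constructed with $|w(x) - y| < \varepsilon$. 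Case (iii) reduces essentially to case (i) via the sub-semigroup $\langle fg, gf \rangle$, whose generators are increasing with attracting fixed points at $\partial I$. The main obstacle I anticipate is sufficiency of case (ii): no fixed-point hypothesis is placed on the decreasing $g$, so one must show that mixed compositions such as $g f^n g$ provide the contraction toward the endpoint of $I$ that $f$ alone cannot reach, exploiting that $g$ is not onto (hence a genuine contraction) and the placement $o(f) \in \partial I$, rather than any explicit information about $g$'s interior fixed point.
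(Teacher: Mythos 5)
Your necessity argument is essentially the paper's (short contradiction arguments showing the orbit misses a neighborhood of an endpoint when the fixed-point conditions fail), and that part is fine as a sketch. The genuine gap is in sufficiency, which is where all the real work in the paper lives. Your plan is to produce a family of words $w_i\in\langle f,g\rangle$ that are length-decreasing and whose images cover $I$, and then invoke Theorem~\ref{general}. The key claim you rely on --- that ``non-onto M\"obius maps contract uniformly on $I$ once the compositions have sufficient depth'' --- is false, and the failure is not a fringe case. Take $I=[0,1]$, $T(x)=x/(x+1)$ (not onto, $o(T)=0$, but \emph{parabolic}: $T'(0)=1$, so $(T^m)'(0)=1$ for every $m$) and $R$ increasing, not onto, with $o(R)=1$, $R(0)=1/2=T(1)$ and $R'(0)>1$ (e.g.\ the paper's $R$ in \eqref{rt} with $a=c=1$, $b=5$). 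The hypotheses of case (i) hold and the orbit is dense, yet every point of $(1/2,1]$ lies only in $Im(R)$, and the only words whose image reaches down to $1/2$ from above are $RT^m$; these satisfy $(RT^m)'(0)=R'(T^m(0))\cdot(T^m)'(0)=R'(0)>1$ for all $m$. So no family of length-decreasing words in $\langle R,T\rangle$ covers a right-neighborhood of $1/2$, and Theorem~\ref{general} is simply not applicable. Deepening a word shrinks its image, so the covering requirement and the contraction requirement pull against each other exactly at such points.

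This is precisely why the paper does not route sufficiency through Theorem~\ref{general} (which it uses only to motivate examples). Instead it normalizes $f,g$ to the forms \eqref{rt}, \eqref{RT2}, \eqref{RT3}, takes the maximum-length gap $A$ in $(0,1)\setminus\overline\Omega$, pulls $A$ back through arbitrarily long words $R^{\alpha_1}T^{\beta_1}\cdots$, and derives a contradiction from quantitative derivative estimates (Lemmas~\ref{impt}, \ref{imp2}, \ref{imp3}) in which the sign/size bookkeeping of the coefficients $u,v$ is essential; the parabolic case $a=1$ needs its own argument (Case~3 of Proposition~\ref{case1}, using that $T(S_k)\subset(0,1/2)$ forces the factor $4/9$). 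Your case~(ii) worry is well placed but understated: the same difficulty already defeats your method in case~(i). To repair the proposal you would have to replace the appeal to Theorem~\ref{general} by a pullback-and-derivative-estimate argument of the paper's type, or find some other mechanism that handles indifferent fixed points at the endpoints.
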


We divide the proof of Theorem \ref{1} into three sections based on the monotonicity types of $f$ and $g$. In section 2, we prove Theorem \ref{general} and part (i) of Theorem \ref{1}. Parts (ii) and (iii) of Theorem \ref{1} are proved, respectively, in sections 3 and 4. 

\section{Case I}
In this section we present the proof of Theorem \ref{general} and part (i) of Theorem \ref{1}. \\
\\
\emph{Proof of Theorem \ref{general}}.
Let $\overline{\Omega}_x$ denote the closure of the orbit of a given $x \in I=[a,b]$. The proof is by contradiction, and so suppose $\overline{\Omega}_x \neq I$ for some $x\in I$. We first show that $a,b \in \overline\Omega_x$. We choose $i,j \in \Lambda$ so that $a \in Im(f_i)$ and $b \in Im(f_j)$. Since, by our assumptions, the maximum and minimum values of $f_i$ and $f_j$ occur at the end points, there are four cases:

Case 1. Suppose $f_i(a)=a$ and $f_j(b)=b$. In this case $f_i^n(x) \rightarrow a$ and $f_j^n(x) \rightarrow b$, since $f_i$ and $f_j$ are length-decreasing, and so $a,b \in \overline \Omega_x$.

Case 2. Suppose $f_i(b)=a$ and $f_j(b)=b$. As in the previous case, $f_j^n(x) \rightarrow b$ which implies that $b \in \overline \Omega_x$. But then $a=f_i(b) \in \overline \Omega_x$ as well. 

Case 3. Suppose $f_i(a)=a$ and $f_j(a)=b$. This case is similar to Case 2.

Case 4. Suppose $f_i(b)=a$ and $f_j(a)=b$. We note that both $f_i \circ f_j$ and $f_j \circ f_i$ are length-decreasing, $f_i \circ f_j(a)=a$, and $f_j \circ f_i(b)=b$. We again conclude that $a,b \in \overline \Omega_x$.

Next, we let $A$ be the maximum-length interval in $(a,b) \backslash \overline \Omega_x$. Such $A$ exists, since $I$ is finite. We have proved that $a,b \in \overline \Omega_x$ and so $f_i(a), f_i(b) \in \overline \Omega_x$ for all $i \in \Lambda$. In particular, $A \subset Im(f_i)$ for some $i \in \Lambda$. Let $B$ be a maximal interval in the open set $f_i^{-1}(A)$. Clearly $B \subseteq (a,b) \backslash \overline \Omega_x$ and $|B|>|A|$. This contradicts the definition of $A$, and the theorem is proved. \hfill $\square$
\\

From now on, for simplicity, we only work with the interval $I=[0,1]$. For any other interval $J=[a,b]$, there is a one-to-one correspondence $\hat \theta: {\mathcal F}_{J}\rightarrow {\mathcal F}_{I}$ defined by
\begin{equation}\hat \theta (f)=\theta \circ f \circ \theta^{-1}~,\end{equation}
so that $\hat \theta$ maps dense orbits to dense orbits and attracting fixed points to attracting fixed points. Here, we set $\theta (x)=(x-a)/(b-a)$ if $J$ is a finite interval. If $J=[a,\infty)$, we let $\theta(x)=1/(x-a+1)$ and if $J=(-\infty, b]$, then we let $\theta(x)=1/(-x+b+1)$.

The following lemma is the key to the proof of of part (i) of Theorem \ref{1}. In the sequel, by $\langle R,T \rangle$, we mean the semigroup of functions generated by $R$ and $T$. Every element $ f \in \langle R,T \rangle$ is a word in $R$ and $T$, where $R$ and $T$ appear a certain number of times in the expression of $f$. Throughout this section, $R$ and $T$ are given by:
\begin{equation}\label{rt}
R(x)={{(ab-c)x+c} \over {(ab-a-c)x+a+c}}~,~T(x)={{x} \over {x+a}}~.
\end{equation}

\begin{lemma} \label{impt}
Suppose $a,b>1 \geq c >0$ and that
\begin{equation}\label{abccond1}
ab-a-c \geq 0~.
\end{equation}
Let $f \in \langle R,T \rangle$ so that the number of appearances of $R$ and $T$ in $f$ are, respectively, $m$ and $n$. Suppose $f=RgT^k$ so that $g$ is the empty word or a word that does not end in $T$, and $k\geq 0$. Then there exist $u,v$ so that 
\begin{equation}\label{uvcond}
u+a^kb^{m-1}+ca^{k-1}b^{m-1} \geq 0~;~u+b^m \geq 0~;~v\geq 0~,
\end{equation}
and
\begin{equation}f^\prime(x)={{a^{n}b^m} \over {((b^m+u)x+v+ca^{k-1}b^{m-1})^2}}~.\end{equation}
\end{lemma}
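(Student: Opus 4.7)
The plan is to translate everything into matrices and prove the lemma by induction on the word length of $f$. I would represent $R$ and $T$ by the matrices $M_R=\begin{pmatrix}ab-c&c\\ ab-a-c&a+c\end{pmatrix}$ and $M_T=\begin{pmatrix}1&0\\1&a\end{pmatrix}$, noting that $\det M_R=a^2b$ and $\det M_T=a$. Then for $f\in\langle R,T\rangle$ with $m$ copies of $R$ and $n$ copies of $T$, the matrix $M_f$ obtained by multiplying letter-matrices in order satisfies $\det M_f=a^{n+2m}b^m$, and writing $M_f=\begin{pmatrix}A_f&B_f\\C_f&D_f\end{pmatrix}$ one has $f'(x)=a^{n+2m}b^m/(C_fx+D_f)^2$. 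Matching with the target form identifies $u=C_f/a^m-b^m$ and $v=D_f/a^m-ca^{k-1}b^{m-1}$, so the three inequalities of the lemma become, respectively,
\begin{equation*}
C_f\ge a^mb^{m-1}(b-a^k-ca^{k-1}),\quad C_f\ge0,\quad D_f\ge ca^{m+k-1}b^{m-1}.
\end{equation*}

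The hypotheses $a,b>1$, $c>0$, and $ab-a-c\ge0$ make every entry of $M_R$ and $M_T$ non-negative, so every entry of $M_f$ is non-negative; this settles the middle inequality automatically. The key step, which I would isolate before the main induction, is the auxiliary claim that for every $h\in\langle R,T\rangle$ with $m_h$ copies of $R$,
\begin{equation*}
C_h+D_h\ge a^{m_h}b^{m_h}.
\end{equation*}
This should follow by a quick induction on word length from the two recursions $C_{hR}+D_{hR}=ab(C_h+D_h)$ (a consequence of $M_R(1,1)^T=ab\,(1,1)^T$, i.e., both rows of $M_R$ sum to $ab$) and $C_{hT}+D_{hT}=C_h+(1+a)D_h$, with base cases $C_R+D_R=ab$ and $C_T+D_T=1+a\ge1$.

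With the auxiliary claim in hand, the main induction on word length should reduce to a short check per case. The base case $f=R$ is a direct computation. For the inductive step, I would split according to the last (innermost) letter of $f$. If $f=f_1T$, then $k\ge1$ and $f_1$ has parameters $(m,n-1,k-1)$; matrix multiplication gives $C_f=C_{f_1}+D_{f_1}$ and $D_f=aD_{f_1}$, so the first condition follows from the auxiliary claim (which already gives $C_f\ge a^mb^m$, exceeding the required bound) and the third condition is the inductive hypothesis for $f_1$ multiplied by $a$. If $f=f_1R$, then $k=0$, $m\ge2$, and $f_1$ has parameters $(m-1,n,k_1)$ for some $k_1\ge0$; matrix multiplication then gives
\begin{equation*}
C_f=(C_{f_1}+D_{f_1})(ab-a-c)+aC_{f_1},\qquad D_f=c(C_{f_1}+D_{f_1})+aD_{f_1},
\end{equation*}
and the auxiliary claim on $f_1$ together with $C_{f_1},D_{f_1}\ge0$ yields $C_f\ge a^{m-1}b^{m-1}(ab-a-c)$ (the first condition with $k=0$) and $D_f\ge ca^{m-1}b^{m-1}$ (the third condition).

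The main obstacle, absent the auxiliary claim, is closing the inequalities in the case $f=f_1R$: a direct induction on the pair $(u_1,v_1)$ must propagate three coupled inequalities simultaneously, and neither of the inductive lower bounds on $u_1$ appears strong enough on its own to carry the argument through. The trick is to notice that $(1,1)^T$ is an eigenvector of $M_R$ with eigenvalue $ab$, so the scalar $C_h+D_h$ is controlled by a clean multiplicative recursion and the induced inequality $C_h+D_h\ge a^{m_h}b^{m_h}$ is preserved under right-multiplication by either $M_R$ or $M_T$.
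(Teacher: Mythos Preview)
Your argument is correct. Both your proof and the paper's proceed by induction on the word length, extending on the right by $R$ or by $T$ with base case $f=R$, but the bookkeeping is organised differently. The paper works directly with the derivative expression and the variables $u,v$, carrying all three inequalities of \eqref{uvcond} forward simultaneously via chain-rule computations; the verification for $F=fR$ in particular runs through a chain of estimates on $aU$ that uses the inductive first inequality on $u$ in an essential way. Your reformulation in terms of the bottom-row entries $C_f,D_f$ of the product matrix isolates the single invariant $C_h+D_h\ge (ab)^{m_h}$ (coming from the row sums of $M_R$, i.e.\ the eigenvector observation), and this decouples the argument: conditions~1 and~3 in the $R$-case, and condition~1 in the $T$-case, follow from this invariant together with entrywise non-negativity, while only condition~3 in the $T$-case actually needs the inductive hypothesis. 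This buys a shorter and more transparent proof, at the negligible cost of the matrix language; the paper's route is more hands-on but obscures why the estimates close up.
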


\begin{proof}
The proof is by induction on the length of the word $f$. The base of the induction is the case $f=R$ with $m=1$, $n=0$, and $k=0$, for which we have:
$$R^\prime(x)={b \over {((b-1-a^{-1}c)x+1+a^{-1}c)^2}}~.$$
Conditions \eqref{uvcond} then follow from \eqref{abccond1}. Suppose the assertion in the lemma is true for the word $f=RgT^k$. We then prove the assertion for $F=fR$ and $G=fT$. A simple calculation shows that
\begin{eqnarray} \nonumber
F^\prime(x)&=&f^\prime(R(x))R^\prime(x)\\ \nonumber
 &=& {{a^{n}b^m} \over {((b^m+u)R(x)+v+ca^{k-1}b^{m-1})^2}} \times {{a^2b} \over {((ab-a-c)x+a+c)^2}}~\\ \nonumber
&=&{{a^{n}b^{m+1}} \over {((b^{m+1}+U)x+V+ca^{-1}b^m)^2}}~,
\end{eqnarray}
where $aV=cu+(a+c)(v+ca^{k-1}b^{m-1}) \geq c(u+a^kb^{m-1}+ca^{k-1}b^{m-1}) \geq 0$ and
\begin{eqnarray} \nonumber
aU &=&-cb^m+(ab-c)u+(ab-a-c)v+ca^{k}b^{m}-ca^{k}b^{m-1}-c^2a^{k-1}b^{m-1} \\ \nonumber
& \geq & -cb^m-(ab-c)(a^kb^{m-1}+ca^{k-1}b^{m-1} )+ca^kb^{m}-ca^kb^{m-1}-c^2a^{k-1}b^{m-1}\\ \nonumber
&\geq & -cb^m-a^{k+1}b^m \geq -a(a^{k}b^m+ca^{k-1}b^m)~,
\end{eqnarray}
which implies that $U+a^kb^m+ca^{k-1}b^m \geq 0$. This completes the proof of the assertion for $F=fR$. For $G=fT$, we have
\begin{eqnarray} \nonumber
G^\prime(x) &= & f^\prime(T(x)) T^\prime(x)\\ \nonumber
&=& {{a^{n}b^m} \over {((b^m+u)T(x)+v+ca^{k-1}b^{m-1})^2}} \times {{a} \over {(x+a)^2}}~\\ \nonumber
&=&{{a^{n+1}b^m} \over {((b^m+U)x+V+ca^kb^{m-1})^2}}~.
\end{eqnarray}
In this case, clearly $U+b^m \geq 0$ and $V \geq 0$. Finally,
\begin{equation}U+a^{k+1}b^{m-1}+ca^{k}b^{m-1}=u+v+ca^{k-1}b^{m-1}+a^{k+1}b^{m-1}+ca^{k}b^{m-1}\geq 0~.\end{equation}
This completes the inductive step and the lemma follows.
\end{proof}

\begin{prop}\label{case1}
Let $a,b \geq 1 \geq c \geq 0$. Moreover, suppose $b>1$ if $c=0$. Let $R$ and $T$ be maps defined by \eqref{rt}. Then the orbit of any $x \in (0,1]$ is dense in $[0,1]$ under the action of the semigroup generated by $R$ and $T$.
\end{prop}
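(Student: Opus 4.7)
The plan is to run the maximum-length-gap strategy from the proof of Theorem~\ref{general}, but with words in $\langle R,T\rangle$ in place of single functions, using Lemma~\ref{impt} to manage derivatives. As a preliminary, observe that $0,1\in\overline{\Omega}_x$ for every $x\in(0,1]$: the map $T$ fixes $0$, its other fixed point $1-a$ lies outside $(0,1]$, and $T'(0)=1/a\le 1$, so $T^n(x)\to 0$; the symmetric argument gives $R^n(x)\to 1$. Hence if $A=(\alpha,\beta)$ is a maximum-length component of $[0,1]\setminus\overline{\Omega}_x$, then $0<\alpha<\beta<1$.

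Assuming $\overline{\Omega}_x\neq[0,1]$ for contradiction, the core task is to exhibit a word $f\in\langle R,T\rangle$ starting with $R$ such that $A\subseteq Im(f)=[f(0),f(1)]$ and $\sup_{f^{-1}(A)} f'<1$. Once this is in hand, the forward invariance of $\overline{\Omega}_x$ combined with the monotonicity of $f$ forces $f^{-1}(A)$ to be an interval in $[0,1]\setminus\overline{\Omega}_x$ of length strictly greater than $|A|$, contradicting the maximality of $A$.

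Lemma~\ref{impt} supplies the derivative control: for a word $f=RgT^k$ with $m$ appearances of $R$ and $n$ of $T$, $f'(x)=a^n b^m/D(x)^2$ where $D$ is non-decreasing linear with $D(0)\ge c\,a^{k-1}b^{m-1}$ (when $c>0$), whence $\sup_{[0,1]}f'\le a^{n+2-2k}/(c^2 b^{m-2})$. Choosing $f$ with enough appearances of $R$ relative to $n$ and $k$ drives this supremum below $1$, making $f$ length-decreasing on all of $[0,1]$. To ensure simultaneously that $A\subseteq Im(f)$, one would use a descent construction: since $Im(R)\cup Im(T)=[0,1]$ (equivalent to $c\le 1$), any reference point $y\in A$ lies in some $Im(w_1)$; the pull-back $w_1^{-1}(y)$ then lies in some $Im(w_2)$, and iterating produces $f=w_1 w_2\cdots w_\ell$ whose image contains $y$ and shrinks with $\ell$. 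Choosing the descent with $R$ appearing sufficiently often meets both constraints.

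The hardest part will be coordinating the two constraints --- a lower bound on $|Im(f)|$ forced by $A\subseteq Im(f)$, versus a lower bound on the number of $R$'s forced by the derivative estimate --- so that both can be satisfied by a single word. Additional care will be needed for the boundary parameter cases outside the strict hypotheses of Lemma~\ref{impt}: when $c=0$ with $b>1$, one has $Im(R)=[0,1]$ already, so the descent simplifies, and the estimate on $D(0)$ must be replaced by one exploiting the inequality $u+b^m\ge 0$ that the lemma also provides; when $ab-a-c<0$, the induction of Lemma~\ref{impt} has to be re-run with the sign of the denominator coefficient reversed, yielding an analogous formula for $f'$.
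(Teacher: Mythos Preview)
Your outline shares the paper's overall scaffolding (maximum-length gap plus iterated pullback), but there is a genuine gap at exactly the point you flag as ``hardest''. Once you begin pulling $A$ back through $R$ and $T$, the choice of letter at each stage is \emph{not} free: the current pullback interval lies either in $Im(R)=[c/(a+c),1]$ or in $Im(T)=[0,1/(a+1)]$ (it cannot straddle $T(1)=1/(a+1)\in\overline\Omega$), and that placement is dictated by the dynamics. So ``choosing the descent with $R$ appearing sufficiently often'' is not an option you can simply exercise. The single bound you extract from Lemma~\ref{impt}, $\sup f'\le a^{n+2-2k}/(c^2 b^{m-2})$, is useless on its own unless you can independently show that $m$ (the number of $R$'s) grows at least proportionally to $n$ along the forced descent; nothing in your plan establishes this.

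The paper closes precisely this gap with a second, elementary inequality going the other way: from $\max_{[0,1]}T'\le 1/a$ and $\max_{[0,1]}R'\le a^2b/(a+c)^2$ one gets, along the forced descent, $\sum\beta_i\le \log_a\!\bigl(a^2b/(a+c)^2\bigr)\sum\alpha_i$. Combined with the Lemma~\ref{impt} bound, which forces $\sum\beta_i\gtrsim(\log_a b)\sum\alpha_i$, this is a contradiction as the word length tends to infinity (note $(a+c)^2>a^2$ requires $c>0$). Your proposal has no analogue of this complementary inequality. The boundary cases are also more delicate than you allow: when $ab-a-c\le 0$ the paper bypasses Lemma~\ref{impt} entirely (one has $\max R'\le 1/b$ directly, so a single pullback through $R$ already contradicts maximality of $A$); when $a=1$ the second inequality above degenerates and a separate argument is needed (the Lemma~\ref{impt} bound then forces $\sum\alpha_i<\infty$, so the descent is eventually $hT^k$, and one finishes by bounding $T'$ on $T([0,1])$); and when $c=0$ the paper does not run the gap argument at all but instead conjugates $(R,T)$ via $\phi(x)=(1/x-1)/a$ to the affine pair $(x/b,\,ax+1)$ on $(0,\infty)$ and cites known density results for such semigroups.
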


\begin{proof}
First, we consider the case $c=0$, where the maps $R$ and $T$ are:
\begin{equation}R(x)={{bx} \over {(b-1)x+1}}~,~T(x)={x \over {x+a}}~.\end{equation}
We make the change of variable $\phi: (0,1) \rightarrow (0,\infty)$. 
\begin{equation}\label{phi}
\phi(x)={1 \over a} \left ({1 \over x}-1 \right )~.
\end{equation}
Then
\begin{equation}\phi \circ R \circ \phi^{-1}(x)={x \over b}~,~\phi \circ T \circ \phi^{-1}(x)=ax+1~.\end{equation}
Now by the results in  \cite{BMS,J1}, the orbit of any $x\in [0,\infty)$ is dense under the action of the semigroup generated by the pair $(x/b, ax+1)$. We conclude that the same holds for the pair $(R,T)$ on $[0,1]$. 

In the remainder of the proof, we assume $c>0$. Clearly $T^n(x) \rightarrow 0$ for all $x\in [0,1]$ as $n \rightarrow \infty$. Thus, we only need to show that the orbit of 0 is dense. Let ${\Omega}$ be the orbit of 0 in $[0,1]$. The proof is by contradiction, and so suppose $\overline \Omega \neq [0,1]$. Let $A$ be the maximum-length interval in $(0,1) \backslash \overline \Omega$. Since $R^n(0) \rightarrow 1$ as $n \rightarrow \infty$, we have $1\in \overline \Omega$ and $1/(a+1)=T(1) \in \Omega$. It follows that either $A \subseteq (0,1/(a+1))$ or $A \subseteq (1/(a+1),1)$. Suppose first that $A \subseteq (0,1/(a+1))$ and we will derive a contradiction. Let $B=T^{-1}(A)$. Clearly $B \subseteq (0,1) \backslash \overline{\Omega}$ and $|B|>|A|$, since $|T^\prime(x)|<1$ for all $x\in (0,1]$. This contradicts our choice of $A$. It follows that $A \subseteq (1/(a+1), 1)$, and so we can write $A=R(A_1)$ for some $A_1 \subseteq (0,1) \backslash \overline{\Omega}$. Now, we divide the proof into three cases:
 \\
 
 Case 1. Suppose that $ab-a-c\leq 0$. In this case, we have
 \begin{equation} |A| < \max_{x \in [0,1]}  R^\prime(x) \times |A_1|\leq {1 \over b}|A_1| \leq  |A_1|~,\end{equation}
 which is a contradiction, and so in this case the proposition holds.
 \\
 
 Case 2. Suppose that $ab-a-c>0$ (hence $b>1$) and $a>1$. There exist sequences of nonnegative integers $\{\alpha_i\}_{i=1}^\infty$ and $\{\beta_i\}_{i=1}^\infty$, $\alpha_i+\beta_i>0$ for all $i\geq 1$, so that for each $n$ there exists $A_n \subseteq (0,1) \backslash \overline{\Omega}$ with
\begin{equation}A=R^{\alpha_1}T^{\beta_1}\ldots R^{\alpha_n}T^{\beta_n}(A_n)~.\end{equation}
This follows from the fact that every $C \subseteq (0,1) \backslash \overline \Omega$ is included in the image of $R$ or $T$. Now, from Lemma \ref{impt}, we have
\begin{eqnarray}\label{aa} \nonumber
1  \leq  {{|A|} \over {|A_n|}} &\leq & \max_{x\in [0,1]} (R^{\alpha_1}T^{\beta_1}\ldots R^{\alpha_n} T^{\beta_n})^\prime (x)\\
&\leq & c^{-2}a^{2+\sum_1^{n} \beta_i -\beta_n }b^{2-\sum_1^{n} \alpha_i}~.
\end{eqnarray}
It follows that
\begin{equation} \label{ineq1}
2(1-\log_a c)+\sum_{i=1}^n \beta_i  -\beta_n\geq (\log_a b) \left (\sum_{i=1}^{n} \alpha_i -2 \right ).
\end{equation}
On the other hand,
\begin{eqnarray} \nonumber
1 \leq {{|A|} \over {|A_n|}} &\leq & \max_{x\in [0,1]}(R^{\alpha_1}T^{\beta_1}\ldots R^{\alpha_n}T^{\beta_n})^\prime (x)\\ \nonumber
 &\leq & \left ( \max R^\prime (x) \right )^{\sum_1^n \alpha_i} \left (\max T^\prime(x) \right )^{\sum_1^n \beta_i} \\ \nonumber
&\leq &\left ({{a^2 b} \over {(a+c)^2}} \right )^{\sum_1^n \alpha_i}\left ( {1 \over a} \right )^{\sum_1^n \beta_i},
\end{eqnarray}
which implies that
\begin{equation}\label{ineq2}
\log_a \left ({{a^2 b} \over {(a+c)^2}} \right )\sum_{i=1}^n \alpha_i \geq \sum_{i=1}^n \beta_i~.
\end{equation}
Since $\sum (\alpha_i+\beta_i) =\infty$, the inequalities \eqref{ineq1} and \eqref{ineq2} are in direct contradiction with each other as $n \rightarrow \infty$. 
\\

Case 3. Suppose that $ab-a-c>0$ but $a=1$. Inequality \eqref{aa} implies that $\sum_{i=1}^\infty \alpha_i$ is finite, i.e. $\alpha_i=0$ for $i$ large enough. In particular there is a word $h$ so that for each $k \geq 1$ there exists an open interval $S_k \subseteq (0,1) \backslash \overline{\Omega}$ with $A=hT^k(S_k)$. But $T(S_k) \subset (0,1/2)$ on which the maximum of $T^\prime(x)$ is 4/9. It follows that
\begin{equation}|A| \leq \max_{x\in [0,1]} h^\prime(x) \times \left ( {4 \over 9} \right )^{k-1} \times  |S_k| \rightarrow 0~,\end{equation}
as $k \rightarrow \infty$. This is clearly a contradiction, and the proof of Proposition \ref{case1} is completed.
\end{proof}

Part (i) of Theorem \ref{1} follows form the following proposition. 

\begin{prop}\label{prop11}
Let $f,g \in {\mathcal F}$ be both increasing. Then the orbit of a given $x\in (0,1)$ is dense in $[0,1]$ if and only if one of the following occurs:
\begin{itemize}
\item[i)] At leas one of $f$ or $g$ is not onto and
$$Im(f) \cup Im(g)=[0,1]~\mbox{and}~\{o(f),o(g)\} =\{0,1\}~.$$
\item[ii)] Or $f(x)=ax/((a-1)x+1)$ and $g(x)=bx/((b-1)x+1)$, where $a>1>b>0$ or $b>1>a>0$, and $\log_a b$ is irrational.
\end{itemize}
\end{prop}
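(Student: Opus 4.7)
The plan is to split into two disjoint cases: (i) at least one of $f, g$ fails to be onto, and (ii) both $f, g$ are onto.

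For the sufficiency of (i), I would swap $f \leftrightarrow g$ and/or conjugate by $x \mapsto 1 - x$ to arrange $o(f) = 0$ and $o(g) = 1$, so that $f(0) = 0$ and $g(1) = 1$. The Möbius self-maps of $[0,1]$ fixing the two endpoints form a one-parameter family $\phi_t(x) = tx/((t-1)x+1)$, and conjugation by $\phi_t$ preserves monotonicity, onto-ness, the location of attracting fixed points, and orbit density. A direct computation shows that for a suitable choice of $t$ the conjugated pair takes the form $(T, R)$ of \eqref{rt} with parameters $a, b \geq 1$ and $0 \leq c \leq 1$ (and $b > 1$ if $c = 0$); the constraint $c \leq 1$ is exactly the translation of $Im(f) \cup Im(g) = [0, 1]$. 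Proposition \ref{case1} then gives density.

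For the necessity of (i), density of $\Omega_x$ forces $Im(f) \cup Im(g) = [0, 1]$ at once, since $\Omega_x \setminus \{x\} \subseteq Im(f) \cup Im(g)$ and the latter is a union of two closed subintervals. To force $\{o(f), o(g)\} = \{0, 1\}$ I argue contrapositively: whenever the attracting fixed points are not $\{0, 1\}$, I exhibit a proper closed $\{f, g\}$-invariant subinterval $J \subsetneq [0, 1]$ that absorbs every orbit. The generic configuration is $o(f), o(g) \in (0, 1)$ with, say, $o(f) \leq o(g)$: the condition $Im(f) \cup Im(g) = [0, 1]$ combined with $o(f), o(g) \in (0, 1)$ forces each of $f, g$ to have its second (repelling) fixed point outside $(0, 1)$, so $f$ sends $[o(f), o(g)]$ monotonically toward $o(f)$ and $g$ toward $o(g)$; hence $[o(f), o(g)]$ is $\{f, g\}$-invariant, and iterating the appropriate map absorbs every starting point. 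The other sub-configurations (both attractors at a single endpoint, or one interior and one at an endpoint) admit analogous invariant intervals such as $[0, f(1)]$, $[0, o(f)]$, or $[o(f), 1]$.

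For case (ii), both $f, g$ are increasing continuous bijections of $[0, 1]$ and thus fix $\{0, 1\}$, yielding the form $f_c(x) = cx/((c-1)x+1)$. The conjugation $\phi(x) = x/(1-x) \colon (0, 1) \to (0, \infty)$ transforms $f_c$ into $y \mapsto cy$, so $\langle f, g \rangle$ acts on $(0, \infty)$ by the family $\{y \mapsto a^m b^n y : m, n \geq 0,\, m + n \geq 1\}$. Density of the orbit of $y_0 > 0$ in $(0, \infty)$ thus reduces to density in $\mathbb{R}$ of the additive sub-semigroup $\{m \log a + n \log b : m, n \geq 0\}$, which by a classical argument holds if and only if $\log a$ and $\log b$ have opposite signs (equivalently, exactly one of $a, b$ exceeds $1$) and $\log_a b$ is irrational. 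The main obstacle throughout is case (i): for sufficiency, the careful verification that the $\phi_t$-conjugation really does normalize $(f, g)$ into the restricted $(T, R)$-family with the stated parameter ranges; for necessity, the bookkeeping of several sub-configurations of attracting fixed points and the identification of the correct invariant subinterval in each.
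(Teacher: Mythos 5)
Your proposal is correct and follows essentially the same route as the paper: normalize the non-onto case by a M\"obius conjugation fixing $0$ and $1$ into the $(R,T)$ form of \eqref{rt} and invoke Proposition \ref{case1} (your observation that $c\le 1$ encodes $Im(f)\cup Im(g)=[0,1]$ matches the paper's explicit computation of $b=(A+B)/(D-B)$ and $c=aB/(D-B)$), reduce the doubly-onto case via $x\mapsto x/(1-x)$ to the multiplicative pair $(ay,by)$ and the classical density criterion for $\{m\log a+n\log b\}$, and establish necessity by trapping the orbit when an attracting fixed point is interior. The only stylistic difference is that your necessity argument catalogs invariant subintervals while the paper argues more directly that $f(x)\ge x$ near a non-attracting endpoint blocks the orbit from descending below $\min\{x_0,g(0)\}$; the substance is the same.
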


\begin{proof} 
i) Without loss of generality, suppose that $o(f)=\{0\}$ and $o(g)=\{1\}$ and that $f$ is not onto, while $g$ may be onto (if $f$ is onto and $g$ is not, then by the change of variable $x \rightarrow 1-x$, we have a pair $(\hat f, \hat g)$ so that $o(\hat f)=\{1\}$, $o(\hat g)=\{0\}$, and $\hat g$ is not onto. Then one would replace $f$ by $\hat g$ and $g$ by $\hat f$, and go on with the proof).

By a change of variable of the form 
\begin{equation}\label{thetadef}
 \theta(x)= {{x} \over {ux +1-u}}~,
\end{equation}
for some $u$, we can assume $f(x)=x/(x+a)$ for some $a \geq 1$. Now let $g(x)=(Ax+B)/(Cx+D)$, where $A+B=C+D>0$ and $D>B \geq 0$. It follows from $o(g)=1$ that $B+C\geq 0$. Next, we set 
\begin{equation}b={{A+B} \over {D-B}}~,~c={{aB} \over {D-B}}~.\end{equation}

We note that $b \geq 1 \geq  c \geq 0$ and $b>1$ if $c=0$. The claim then follows from Proposition \ref{case1} where $f$ and $g$ are give by $R$ and $T$ in \eqref{rt}. 
\\

ii) By the change of variable $\phi(x)=1/x-1$, we have $\hat f(x)=\phi^{-1}\circ f \circ \phi(x)=x/a$ and $\hat g(x)=\phi^{-1}\circ g \circ \phi(x)=x/b$. The orbits of $(x/a,x/b)$ in $(0,\infty)$ are dense if and only if $\log_a b$ is irrational and $a>1>b>0$ or $b>1>a>0$.
\\

To prove the converse, suppose $\Omega_{x_0}$ is dense in $[0,1]$ for some $x_0$. Clearly $Im(h) \subset Im(f) \cup Im(g)$ for every $h \in \langle f, g\rangle $. And so if $Im(f) \cup Im(g) \neq [0,1]$, the orbit of $x$ cannot be dense. It follows that $Im(f) \cup Im(g)=[0,1]$. Then suppose that $f$ and $g$ are not onto and $f(0)=0$. If 0 was not an attracting fixed point of $f$, then $f(x)\geq x$ for $x$ near $0$. In particular, there would not exist any $x \in \Omega$ with $x< \min \{x_0, g(0)\}$. It follows that $o(f)=0$ and similarly $o(g)=1$. Next, suppose $f$ is onto, and without loss of generality, suppose that $o(f)=0$ (otherwise, we can use the change of variable $x \rightarrow 1-x$ to arrive at this assumption). As before, we should have $o(g)=1$, otherwise there is no $x\in \Omega_{x_0}$ with $x>\max \{x_0, g(x_0)\}$. The case where both $f$ and $g$ are onto was discussed in part (ii) of this proof. The proof of the Proposition is now complete. \end{proof}

\section{Case II}
The proof of part (ii) of Theorem \ref{1} is more technical. Through a pair of lemmas, we first prove that there are sequences of positive integers $\{\alpha_i\}_{i=1}^\infty$ and positive odd integers $\{\beta_i\}_{i=1}^\infty$ so that for each $k\geq 1$ the maximum-length interval $A \subseteq (0,1) \backslash \overline \Omega$ can be written as
$$A=T^{\beta}R^{\alpha_1}T^{\beta_1}R^{\alpha_2}T^{\beta_2}\ldots R^{\alpha_k}T^{\beta_k}(A_k)~,$$
where $\beta \in \{0,1\}$ and $A_k \subseteq (0,1) \backslash \overline \Omega$. Here and throughout this section, $\Omega$ denotes the orbit of 0. We begin with a lemma that will help us to obtain upper bounds on the derivatives of some special elements in the semigroup generated by $R$ and $T$, where $R$ and $T$ throughout this section are given by
\begin{equation}\label{RT2}
R(x)={{ax} \over {(-ab+a+c)x+ab}}~,~T(x)={{a} \over {x+a}}~.
\end{equation}

\begin{lemma}\label{imp2}
Let $a,b,c > 0$. Let $R$ and $T$ be defined by \ref{RT2} and $f$ be a word such that
\begin{equation}\label{deff}
f=R^{\alpha_1}T^{\beta_1}R^{\alpha_2}T^{\beta_2}\ldots R^{\alpha_k}T^{\beta_k}~,
\end{equation}
where $\alpha_i,\beta_i>0$ and $\beta_i$ is odd for all $i \leq k$. Moreover, let 
\begin{equation}s=\left \lfloor {k \over 2} \right \rfloor~,~M=\sum_{i=0}^{s} \alpha_{2i+1}~,~N=\sum_{i=1}^s \alpha_{2i}~.\end{equation}
If $k$ is even, then there exist $u,v\geq 0$ and $K\geq \max\{M,N\}$ and $L \geq N$ such that 
\begin{equation}\label{even}
f^\prime(x)={{a^kb^{M+N}} \over {\left ((ca^{s-1}b^{K}+a^sb^N+u)x+v+a^sb^M+c^2a^{s-1}b^{L-1} \right )^2}}~.
\end{equation}
If $k\geq 3$ is odd, then there exist $u,v \geq 0$ and $K,L \geq N$ such that
\begin{equation}\label{odd}
f^\prime(x)={{-a^kb^{M+N}} \over {\left ((a^sb^M+c^2a^{s-1}b^L+u)x+v+a^{s+1}b^N+ca^sb^{K}  \right )^2}}~.
\end{equation}
\end{lemma}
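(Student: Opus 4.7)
The plan is to prove both formulas of Lemma \ref{imp2} by induction on $k$. Since each inductive step increments $k$ by one and therefore flips its parity, the even and odd formulas are established in tandem, starting from a single base case.

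For the base case $k = 2$, I would represent each of $R$ and $T$ by its $2\times 2$ Möbius matrix and write $f$ as the matrix product $M_f = (M_R)^{\alpha_1}(M_T)^{\beta_1}(M_R)^{\alpha_2}(M_T)^{\beta_2}$, so that $f'(x) = \det(M_f)/(C_f x + D_f)^2$ with $(C_f, D_f)$ the bottom row of $M_f$. The powers $(M_R)^{\alpha}$ and $(M_T)^{\beta}$ are computed in closed form by a short one-variable induction on the exponent, and the resulting four-factor product is expanded to read off $C_f$ and $D_f$, matching them against the claimed denominator structure with $s = 1$, $M = \alpha_1$, $N = \alpha_2$, and appropriate initial values of $u, v, K, L$.

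For the inductive step from $k$ to $k+1$, I write $f_{k+1} = f_k \circ R^{\alpha_{k+1}} T^{\beta_{k+1}}$. By the chain rule,
\[
f_{k+1}'(x) = f_k'\bigl(R^{\alpha_{k+1}} T^{\beta_{k+1}}(x)\bigr) \cdot (R^{\alpha_{k+1}} T^{\beta_{k+1}})'(x),
\]
and substituting the inductive formula for $f_k'$ together with the closed-form Möbius data for $R^{\alpha_{k+1}} T^{\beta_{k+1}}$, the denominator of $f_{k+1}'(x)$ simplifies to a perfect square whose linear form in $x$ has coefficients built from the previous $(u, v, K, L)$ plus explicit new contributions. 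Grouping terms and comparing to the formula claimed for $f_{k+1}$ (with parity now flipped and $s, M, N$ updated) reads off the new auxiliary parameters. Because the even and odd formulas differ in where the $c$ and $c^2$ terms sit in the denominator, the parity-changing step must be checked twice (even-to-odd and odd-to-even), but each calculation proceeds the same way.

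The main obstacle will be verifying the positivity conditions $u, v \geq 0$ and the lower bounds $K \geq \max\{M, N\}$, $L \geq N$ (and their odd-$k$ analogs) after each inductive step. This parallels the sign analysis in the proof of Lemma \ref{impt}: although the coefficient $-ab + a + c$ appearing in $R$ can be negative, whenever it enters the matrix multiplication it is always combined with strictly positive contributions from $ab$, $ca^i b^j$, and the entries of $(M_T)^{\beta_{k+1}}$. A careful accounting of these contributions shows that the negative terms are dominated by the positive ones in precisely the way the stated lower bounds require, so the invariants propagate from $k$ to $k+1$.
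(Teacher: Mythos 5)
Your overall strategy (chain rule plus bookkeeping of the denominator's linear form) is the same as the paper's, but your induction is organized differently: you append one block $R^{\alpha_{k+1}}T^{\beta_{k+1}}$ at a time and alternate between the even and odd formulas, whereas the paper proves the even formula by appending \emph{two} blocks at a time (going from $k=2s$ to $k=2s+2$ via the explicit two-block word $h=R^{\alpha_{k+1}}T^{\beta_{k+1}}R^{\alpha_{k+2}}T^{\beta_{k+2}}$ in \eqref{defh}--\eqref{hprime}) and then derives the odd formula from the even one. This difference is not cosmetic, and it is where your plan has a genuine gap.

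The problem is the odd-to-even step. Write the denominator of $f_k'$ as $(Px+Q)^2$ and the new block as $h(x)=(n_1+n_2x)/(d_1+d_2x)$, so the new linear form is $(Pn_2+Qd_2)x+(Pn_1+Qd_1)$. For odd $k$ you have $P=a^sb^M+c^2a^{s-1}b^L+u$ and $Q=v+a^{s+1}b^N+ca^sb^K$ with, per the lemma's statement, only $K,L\geq N$. In the new $x$-coefficient the sole term of the shape $c\,a^{s}b^{(\cdot)}$ that you can exhibit is $ca^sb^{K}\cdot b^{\alpha_{k+1}}=ca^{s'}{}^{-1}b^{K+\alpha_{k+1}}$ (the term $a^sb^M$ gets multiplied by $n_2$, which is not a clean power of $b$ and can even vanish, e.g.\ $RT(x)=a/(a+c+bx)$ has $n_2=0$). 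The even formula demands $K'\geq\max\{M',N'\}$ with $M'=M$, and $K\geq N$ gives you $K+\alpha_{k+1}\geq N+\alpha_{k+1}=N'$ but \emph{not} $K+\alpha_{k+1}\geq M$: after an even-to-odd step in which $M$ jumped by a large $\alpha_{2s+1}$, the bound $K\geq N$ retains no control over $M$. So the assertion that ``the invariants propagate from $k$ to $k+1$'' fails if you take the lemma's odd-case conditions as your induction hypothesis.

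The gap is repairable: in the even-to-odd step the new constant term actually contains both $ca^sb^{K}$ and $ca^sb^{M_{\mathrm{even}}+\alpha-1}=ca^sb^{M'-1}$, so you may strengthen the odd-case hypothesis to $K\geq\max\{M-1,N\}$; this strengthened invariant does propagate and yields $K+\alpha\geq M$ in the next step. Alternatively, follow the paper and advance the even case two blocks at a time, which produces the cross term $ca^sb^{M+\alpha_{k+1}+\alpha_{k+2}-1}$ directly and makes the comparison $\max\{K+\alpha_{k+2},\,M+\alpha_{k+1}+\alpha_{k+2}-1\}\geq\max\{M+\alpha_{k+1},\,N+\alpha_{k+2}\}$ immediate. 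Either fix works, but as written your plan asserts precisely the step that does not go through. (A minor additional caveat: your base case via closed-form powers $(M_T)^{\beta}$ involves the eigenvalues of $\binom{0\ a}{1\ a}$ and is far messier than the paper's device of tracking only the leading terms plus unspecified nonnegative remainders $\gamma,\delta,\mu,\lambda$; it is workable but you should not expect clean closed forms.)
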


\begin{proof}
We first prove \eqref{even} by induction on $s$. A simple induction shows that for a positive integer $m$ and odd integer $n$, there exist $\gamma, \delta, \mu, \lambda \geq 0$ so that
\begin{equation}\label{RmTn1}
R^mT^n(x)={{a+\gamma +\delta x} \over {a+cb^{m-1}+b^mx+\mu  + \lambda x}}~
\end{equation}
and
\begin{equation}\label{RmTn2}
(R^mT^n)^\prime(x)={{-ab^m} \over {(a+cb^{m-1}+b^mx+\mu + \lambda x)^2}}~.
\end{equation}
For $k=2$ and $h=R^{\alpha_1} T^{\beta_1} R^{\alpha_2}T^{\beta_2}$, it follows that:
\begin{equation}\label{defh}
h(x)={{ab^{\alpha_2}x+cab^{\alpha_2-1}+C_1+C_2x} \over  {( cb^{\alpha_1+\alpha_2-1} +ab^{\alpha_2}+u)x+v+ab^{\alpha_1}+c^2b^{\alpha_1+\alpha_2-2} }}~
\end{equation}
where $C_1,C_2,u,v \geq 0$, and
\begin{equation}\label{hprime}
h^\prime(x)={{a^2b^{\alpha_1+\alpha_2}} \over {(( cb^{\alpha_1+\alpha_2-1} +ab^{\alpha_2}+u)x+v+ab^{\alpha_1}+c^2b^{\alpha_1+\alpha_2-2} )^2}}~,
\end{equation}
which is of the form \eqref{even} with $K=\alpha_1+\alpha_2-1 \geq \max \{\alpha_1,\alpha_2\}$ and $L=\alpha_1+\alpha_2-2 \geq \max\{\alpha_1,\alpha_2\}-1$. Next, suppose \eqref{even} holds for $k=2s$ and $f$ given by \eqref{deff}. Let $g=fh$, where $h=R^{\alpha_{k+1}}T^{\beta_{k+1}}R^{\alpha_{k+2}}T^{\beta_{k+2}}$, and $\beta_{k+1}$ and $\beta_{k+2}$ are odd. We calculate from \eqref{even}, \eqref{defh}, and \eqref{hprime}:
\begin{eqnarray}\label{intense}
g^\prime(x)&=& f^\prime(h(x))*h^\prime(x)\\ \nonumber
&=&{{-a^{k}b^{M+N} h^\prime(x)}\over  {\left ((ca^{s-1}b^K+a^sb^N+u)h(x)+(v+a^sb^M+c^2a^{s-1}b^{L-1}) \right )^2}}\\ \nonumber
\end{eqnarray}
After an algebraic simplification (that includes canceling the denominators of $(h(x))^2$ and $h^\prime(x)$), the coefficient of $x$ in the denominator of this fraction is given by:
$$ca^sb^{K+\alpha_{k+2}}+a^{s+1}b^{N+\alpha_{k+2}}+ca^sb^{M+\alpha_{k+1}+\alpha_{k+2}-1}+U~,$$
for some $U \geq 0$. To show that \eqref{even} holds, we need to show that 
$$\max \{K+\alpha_{k+2}, M+\alpha_{k+1}+\alpha_{k+2}-1 \}\geq \max \{M+\alpha_{k+1},N+\alpha_{k+2}\}~,$$
which clearly holds. On the other hand, the constant in the denominator of the fraction is given by
$$(ca^{s-1}b^{K}+a^sb^N+u)cab^{\alpha_{k+2}-1}+a^{s+1}b^{M+\alpha_{k+1}}+V~$$
$$ \geq c^2a^sb^{K+\alpha_{k+2}-1}+a^{s+1}b^{M+\alpha_{k+1}}~.$$
for some $V \geq 0$. This completes the proof of \eqref{even}, since $K+\alpha_{k+2}-1 \geq N+\alpha_{k+2}-1$. 
The proof of \eqref{odd} follows similarly by using \eqref{even}, \eqref{defh}, and \eqref{hprime}.
\end{proof}

\begin{lemma}\label{seq}
Suppose $a,b,c>0$ and $c \leq a/(a+c)$. Then for any interval $A\subseteq (0,1) \backslash \overline{\Omega}$ there exists a sequence $\alpha_i$ of positive integers and $\beta \in \{0,1\}$ so that for each $n\geq 1$ there exists $A_n \subseteq (0,1) \backslash \overline{\Omega}$ with 
\begin{equation}
A=T^\beta R^{\alpha_1}T R^{\alpha_2}T\ldots R^{\alpha_n}T(A_n)~.
\end{equation}
\end{lemma}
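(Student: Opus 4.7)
The plan is to construct the sequence $(\alpha_i)_{i\geq 1}$ and the bit $\beta$ by a backward-orbit algorithm, peeling off from $A$ one factor of $T$ (either as the outermost $T^\beta$ or as the single $T$ separating consecutive $R$-blocks) together with a maximal block $R^{\alpha_i}$ at each stage. The crux is the following geometric observation, which encodes the hypothesis $c\leq a/(a+c)$ in the sharpest possible form: for every connected $B\subseteq(0,1)\setminus\overline\Omega$ with $B\not\subseteq Im(R)$, one has $B\subseteq Im(T)$ \emph{and} $T^{-1}(B)\subseteq Im(R)$. This is precisely what forces exactly one $T$ to appear between successive $R^{\alpha_i}$'s.

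To establish the observation, note that $0$, $1$, $T(1)=a/(a+1)$, and $R(1)=a/(a+c)$ all lie in $\Omega$, so any connected subset of $(0,1)\setminus\overline\Omega$ lies in one of the three open intervals $(0,a/(a+1))$, $(a/(a+1),a/(a+c))$, or $(a/(a+c),1)$. Since $Im(R)=[0,a/(a+c)]$ and $Im(T)=[a/(a+1),1]$, only the rightmost interval misses $Im(R)$, and any $B$ there lies in $Im(T)$. The explicit inverse $T^{-1}(y)=a(1-y)/y$ yields $T^{-1}((a/(a+c),1))=(0,c)$, and the hypothesis $c\leq a/(a+c)$ is literally the statement $(0,c)\subseteq Im(R)$. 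Forward invariance $R(\overline\Omega),T(\overline\Omega)\subseteq\overline\Omega$ ensures that $T^{-1}(B)$ remains inside some gap.

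Now given $A$, set $B_0:=T^{-\beta}(A)\subseteq Im(R)$ by choosing $\beta\in\{0,1\}$ according to the observation. Inductively, given a gap subset $B_{i-1}\subseteq Im(R)$, let $\alpha_i$ be the largest positive integer with $B_{i-1}\subseteq Im(R^{\alpha_i})$; finiteness holds because the nested images $Im(R^k)=[0,R^k(1)]$ shrink to the attractor $\{o(R)\}=\{0\}$ of $R$, while $B_{i-1}$ is an interval disjoint from $\overline\Omega\ni 0$. Define $B_i:=T^{-1}(R^{-\alpha_i}(B_{i-1}))$. By maximality of $\alpha_i$, the set $R^{-\alpha_i}(B_{i-1})$ is not contained in $Im(R)$, so the observation yields $B_i\subseteq Im(R)$ with $B_i\subseteq(0,1)\setminus\overline\Omega$. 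Unwinding gives $A=T^\beta R^{\alpha_1}TR^{\alpha_2}T\cdots R^{\alpha_n}T(B_n)$, and we take $A_n:=B_n$.

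The main obstacle, in my view, is isolating the exact form of the key observation: recognizing that the numerical hypothesis $c\leq a/(a+c)$ is equivalent to the geometric containment $T^{-1}((a/(a+c),1])\subseteq Im(R)$, which is what pins the number of $T$'s between consecutive $R$-blocks at exactly one. A secondary subtlety is the finiteness of each $\alpha_i$, which depends on $R$ having $0$ as its attracting fixed point in $[0,1]$ — a condition supplied implicitly by the Case II context where $o(R)=0\in\partial I$.
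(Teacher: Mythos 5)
Your proof is correct and is essentially the paper's own argument: pull back by the maximal power of $R$, note the result escapes $Im(R)=[0,a/(a+c)]$ and (avoiding $a/(a+c)=RT(0)\in\overline\Omega$) lands in $(a/(a+c),1)\subseteq Im(T)$, then pull back by one $T$ into $(0,c)\subseteq Im(R)$ via the hypothesis $c\leq a/(a+c)$, and iterate. Your explicit isolation of the ``key observation'' and your remark that finiteness of the $\alpha_i$ rests on $R^n(1)\to 0$ (which the paper also invokes, justified only by the Case II hypotheses on $a,b,c$) are just cleaner packagings of the same steps.
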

\begin{proof}
Since $a/(a+c)=RT(0) \in {\Omega}$, we have either $A \subseteq (a/(a+c),1)$ or $A \subseteq (0,a/(a+c))$. First suppose that $A \subseteq (0,a/(a+c))$. We set $\beta=0$ and choose the largest $\alpha_1$ so that $B_1=R^{-\alpha_1}(A)\subseteq [0,1]$. Such a choice of $\alpha_1$ is possible, since $R^n(x) \rightarrow 0$ uniformly for $x\in [0,1]$ as $n \rightarrow \infty$. In particular $B_1$ is not included in $Im(R)=[0,a/(a+c)]$. On the other hand, $B_1 \subseteq (0,1) \backslash \overline{\Omega}$, and so $B_1$ does not contain the point $a/(a+c)$. It follows that $B_1 \subseteq (a/(a+c),1) \subseteq Im(T)$. Then we let $A_1=T^{-1}(B_1) \subseteq (0,c) \subseteq (0,a/(a+c))$. One can repeat this argument and obtain the sequence $\alpha_i$. The case of $A \subseteq (a/(a+c),1)$ is similar but $\beta=1$ in this case. 
\end{proof}

\begin{lemma}\label{seq2}
Suppose that $a,b,c>0$ and $c \geq a/(c+a)$. Let $A$ be the maximum-length interval in $(0,a/(a+c)) \backslash \overline \Omega$. Then the same conclusion of Lemma \ref{seq} holds with $\beta=0$ and $A_k \subseteq (0,a/(a+c))\backslash \overline \Omega$. 
\end{lemma}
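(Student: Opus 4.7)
The proof adapts the inductive construction of Lemma \ref{seq}. Starting from $A_0:=A$, the plan is to construct, for each $i\geq 1$, a positive integer $\alpha_i$ and an interval $A_i\subseteq (0,a/(a+c))\setminus\overline{\Omega}$ with $A_{i-1}=R^{\alpha_i}T(A_i)$. At the $i$-th step, assuming $A_{i-1}\subseteq (0,a/(a+c))\setminus\overline{\Omega}$, I would choose $\alpha_i$ to be the largest positive integer such that $B_i:=R^{-\alpha_i}(A_{i-1})\subseteq [0,1]$; this maximum exists because $R^n(x)\to 0$ uniformly on $[0,1]$. Exactly as in Lemma \ref{seq}, maximality forces $B_i\not\subseteq Im(R)=[0,a/(a+c)]$, while the invariance $R(\overline{\Omega})\subseteq\overline{\Omega}$ gives $B_i\cap\overline{\Omega}=\emptyset$; combined with $a/(a+c)=RT(0)\in\overline{\Omega}$, this forces $B_i\subseteq (a/(a+c),1)\subseteq Im(T)$.

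The new ingredient is the observation that the orbit element $TRT(0)=(a+c)/(a+c+1)$ lies in $\overline{\Omega}$ and, under the hypothesis $c\geq a/(a+c)$, satisfies $(a+c)/(a+c+1)\geq a/(a+c)$. Thus this orbit point sits in $(a/(a+c),1]$ and splits $(a/(a+c),1)\setminus\overline{\Omega}$ into two pieces. Since $B_i$ is a connected gap interval, it lies entirely in either the \emph{good} upper part $((a+c)/(a+c+1),1)$ or the \emph{bad} lower part $(a/(a+c),(a+c)/(a+c+1))$. In the good case, $A_i:=T^{-1}(B_i)\subseteq T^{-1}(((a+c)/(a+c+1),1))=(0,a/(a+c))$, and $A_i\cap\overline{\Omega}=\emptyset$ follows from the $T$-invariance of $\overline{\Omega}$, closing the inductive step.

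The main obstacle is ruling out the bad case. If $B_i$ were to land in the bad sub-interval, then $A_i\subseteq (a/(a+c),c)$ would be disjoint from $Im(R)=[0,a/(a+c)]$, so no further $R$-pullback would be possible and the recursion would terminate prematurely. I intend to exclude this via the maximality of $A=A_0$: in the bad case $A_{i-1}=R^{\alpha_i}T(A_i)$ is trapped inside the specific sub-interval $R^{\alpha_i}\bigl((a/(a+c),(a+c)/(a+c+1))\bigr)$ of $(0,a/(a+c))$, whose endpoints $R^{\alpha_i-1}(a/(a+c+bc))$ and $R^{\alpha_i-1}(r)$ (with $r=RTRT(0)$) both lie in $\overline{\Omega}$. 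A length comparison exploiting the contractive behavior of $R$ near $0$ and the precise location of these orbit points should yield a gap in $(0,a/(a+c))\setminus\overline{\Omega}$ strictly longer than $|A|$, contradicting maximality. A subtler point that will need attention is how the maximality of $A_0$ propagates to ensure the good-side containment at \emph{every} step, not merely the first; this may well require a refined length estimate at each stage of the induction rather than a single application of the maximality at $A_0$.
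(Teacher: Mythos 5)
Your setup is correct as far as it goes: the choice of maximal $\alpha_i$, the conclusion $B_i\subseteq(a/(a+c),1)\subseteq Im(T)$, the computation $TRT(0)=(a+c)/(a+c+1)\geq a/(a+c)$ under the hypothesis $c\geq a/(a+c)$, and the resulting good/bad dichotomy are all right. But there is a genuine gap exactly where you flag it: the bad case is never ruled out. If $B_i\subseteq(a/(a+c),(a+c)/(a+c+1))$, then $A_i=T^{-1}(B_i)\subseteq(a/(a+c),c)$ lies outside $(0,a/(a+c))$, and the maximality of $A$ gives you nothing there --- $A$ is maximal only among gaps inside $(0,a/(a+c))$, so there is no inequality between $|A_i|$ (or $|B_i|$) and $|A|$ to exploit. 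The observation that $A_{i-1}\subseteq R^{\alpha_i}\bigl((a/(a+c),(a+c)/(a+c+1))\bigr)$ with orbit points as endpoints only says that $A_{i-1}$ sits inside an interval bounded by points of $\overline{\Omega}$, which is true of every gap and yields no contradiction by itself; and since $R'$ and $T'$ are not uniformly below $1$ (e.g.\ $R'(0)=1/b$ but $R'(1)=a^2b/(a+c)^2$ can exceed $1$), a crude contraction estimate will not close the argument either. As written, your recursion may simply terminate at some step.

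The paper sidesteps the bad case entirely by not insisting on a single $T$-pullback. It takes $\beta_i$ to be the \emph{maximal} power with $A_i\doteq T^{-\beta_i}(B_i)\subseteq[0,1]$; maximality forces $A_i\not\subseteq Im(T)=[a/(a+1),1]$, and since $a/(a+1)=\lim_{n\rightarrow\infty}TR^n(0)\in\overline{\Omega}$ cannot lie in the gap $A_i$, one gets $A_i\subseteq[0,a/(a+1))\subseteq Im(R)\cap(0,a/(a+c))$ unconditionally, so the recursion never stalls. The price is that $\beta_i$ may exceed $1$; the paper then invokes the derivative formulas of Lemma \ref{imp2} together with $|A_i|\leq|A|$ (now legitimate, since $A_i$ does land back in $(0,a/(a+c))\setminus\overline{\Omega}$, where $A$ is maximal) to show that each $\beta_i$ is odd --- which is the form of the conclusion actually consumed in Proposition \ref{case2}. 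If you want to rescue your approach, note that iterating $T^{-1}$ until you exit the bad region is precisely what the maximal-$\beta_i$ device accomplishes, and that you should then prove oddness of the exponents rather than insist they equal $1$.
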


\begin{proof}
We construct the sequence $\{\alpha_i\}_{i=1}^\infty$ inductively. Let $\alpha_1$ be the maximum integer so that $B_1 \doteq R^{-\alpha_1}(A) \subseteq [0,1]$. Such $\alpha_1$ exists, since $R^n(x) \rightarrow 0$ uniformly for $x\in [0,1]$. By our choice of $\alpha_1$, we should have $B_1 \subseteq (a/(a+c),1) \subseteq Im(T)$ (see the proof of Lemma \ref{seq}). Now, choose $\beta_1$ to be the maximum number so that $A_1 \doteq T^{-\beta_1}(B_1) \subseteq [0,1]$. In particular $A_1$ is not included in $Im(T)=[a/(a+1),1]$. Since $A_1 \cap \overline \Omega = \emptyset$, $A_1$ does not include $\lim_{n \rightarrow \infty} TR^n(0)=a/(a+1)$. It follows that $A_1 \subseteq [0,a/(a+1) \subseteq Im(R)$, and so $A=R^{\alpha_1}T^{\beta_1}(A_1)$ for $A_1 \subseteq (0,a/(a+c))$. 

Now suppose we have constructed the sequences $\{\alpha_i\}_{i=1}^{k-1}$ of positive integers and $\{\beta_i\}_{i=1}^{k-1}$ of positive odd integers, $k\geq 2$, so that $A=F_{k-1}(A_{k-1})$, where $F_{k-1}= R^{\alpha_1}T^{\beta_1} \ldots R^{\alpha_{k-1}}T^{\beta_{k-1}}=gT^{\beta_{k-1}}$ and $A_{k-1} \subseteq (0,a/(a+c)) \backslash \overline \Omega$. Choose $\alpha_k$ to be the maximum positive integer such that $B_k \doteq R^{-\alpha_k}(A_{k-1}) \subseteq [0,1]$. Similar to the base case, by our choice of $\alpha_k$, we have $B_k \subset Im(T)$. Then, we choose $\beta_k$ to be the maximum integer such that $A_k \doteq T^{-\beta_k}(B_{k}) \subseteq [0,1]$. It follows that $A=F_k(A_k)$, and by our choice of $\beta_k$, we have $A_k \subset (0,a/(a+c)) \backslash \overline \Omega$.

It is left to show that $\beta_k$ is odd. On the contrary, suppose $\beta_k=2l+2$ and write $F_k=fT$ so that Lemma \ref{imp2} is applicable to $f$. Suppose first that $k$ is even. Then $\max F^\prime(x) \geq 1$, since $A=F(A_k)$ and $|A|\geq |A_k|$. It follows from Lemma \ref{imp2} that 
\begin{eqnarray}\nonumber
1\leq |(fT)^\prime(x)|&=&{{|a^kb^{M+N}T^\prime(x)|} \over {\left ((ca^{s-1}b^K+a^sb^N+u)T(x)+v+a^sb^M+c^2a^{s-1}b^{L-1} \right )^2}}\\ \nonumber
&\leq &{{a^{k+1}b^{M+N}} \over {(ca^sb^K+a^{s+1}b^N+a^{s+1}b^M+c^2a^sb^{L-1})^2}}\\ \nonumber
& \leq & {{a^{k+1}b^{M+N}} \over {(ca^sb^K+a^{s+1}b^{\max \{M,N\}})^2}}\\ \nonumber
&\leq & {{a} \over {(c+a)^2}} \leq {c \over {c+a}}<1 ~,
\end{eqnarray}
and so we have a contradiction in this case. Next, suppose $k$ is odd. Similarly, 
\begin{eqnarray}\nonumber
1 \leq |(fT)^\prime(x)|&=&{{|a^kb^{M+N}T^\prime(x)|} \over {\left ((a^sb^M+c^2a^{s-1}b^L+u)T(x)+v+a^{s+1}b^N+ca^sb^K \right )^2}}\\ \label{cont1}
&< &{{a^{k+1}b^{M+N}} \over {(a^{s+1}b^M+c^2a^sb^L+ca^{s+1}b^K)^2}}~.
\end{eqnarray}
There are two cases:
\\

Case 1. Suppose $M\geq N$. Then continuing from \eqref{cont1}, we have the contradiction:
$$1\leq |(fT)^\prime(x)| < {{a^{k+1}b^{M+N}} \over {(a^{s+1}b^M)^2}} \leq b^{N-M} \leq 1~.$$

Case 2. Suppose $N\geq M$. Again continuing from \eqref{cont1}, we have the contradiction:
$$1\leq |(fT)^\prime(x)| \leq  {{a^{k+1}b^{M+N}} \over {(c^2a^sb^L+ca^{s+1}b^K)^2}} < {{a^2} \over {(c^2+ca)^2}} \leq {1 \over {c^2}} \left ({a \over {(c+a)^2}} \right )^2 \leq 1~.$$
In either case of $k$ odd or even, we have proved that $\beta_{k}$ is odd. The proof of the lemma is now complete.
\end{proof}

\begin{prop}\label{case2}
Let $b \geq 1 > c \geq  0$ and $a>0$. Moreover, suppose that $b>1$ if $c=0$. Let $R$ and $T$ be maps given by \eqref{RT2}. Then the orbit of any $x \in [0,1]$ is dense in $[0,1]$ under the action of the semigroup generated by $R$ and $T$.
\end{prop}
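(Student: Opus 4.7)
\noindent\textit{Proof proposal.}
The plan is to parallel the structure of Proposition \ref{case1}: dispose of the degenerate case $c=0$ by conjugation to an affine pair, then for $c>0$ argue by contradiction using Lemmas \ref{seq}, \ref{seq2}, and the explicit derivative formula of Lemma \ref{imp2}. When $c=0$, the pair $R(x)=bx/((b-1)x+1)$ and $T(x)=a/(x+a)$ with $b>1$ conjugates via a map analogous to \eqref{phi} to an affine pair on a half-line whose hypercyclicity is already established in \cite{BMS,J1}.

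For $c>0$, I would first observe that $R$ is length-decreasing on $[0,1]$ with attracting fixed point $0$, so $R^n(x)\to 0$ uniformly in $x$ and $0\in\overline{\Omega_x}$ for every $x$; hence it suffices to show the orbit $\Omega$ of $0$ is dense. Arguing by contradiction, let $A$ be a maximum-length interval in $(0,1)\setminus\overline{\Omega}$. According to whether $c\leq a/(a+c)$ or $c\geq a/(a+c)$, I would invoke Lemma \ref{seq} or Lemma \ref{seq2} to produce, for every $n\geq 1$, an identity $A=f_n(A_n)$ with $A_n\subseteq(0,1)\setminus\overline{\Omega}$ and $f_n$ a word whose block structure is prescribed by the lemma. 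Maximality of $A$ then yields $|A|\geq|A_n|$, hence $\max_{x\in[0,1]}|f_n'(x)|\geq 1$.

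Next, I would apply Lemma \ref{imp2} to $f_n$. Since the denominator of $f_n'(x)$ is the square of an affine function of $x$ with nonnegative coefficients, its maximum on $[0,1]$ is attained at $x=0$, yielding a bound of the shape $a^k b^{M+N}/(a^s b^M)^2=b^{N-M}$ in the even-length case (with analogous forms otherwise). I would complement this by the crude product estimate $\max|f_n'|\leq(\max|R'|)^{M+N}(\max|T'|)^{\sum\beta_i}$ together with $\max|T'|=1/a$ and $\max|R'|\leq\max\{1/b,\,a^2b/(a+c)^2\}$. These two bounds are intended to play the roles of \eqref{ineq1} and \eqref{ineq2} in Proposition \ref{case1}: one forces a $\log b$-weighted one-sided inequality between $M$ and $N$, the other a competing weighted constraint in $a$, $b$, $c$. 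Since the total word length diverges with $n$, the two inequalities should become incompatible and deliver the contradiction.

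The hard part, I expect, will be the boundary cases where the principal bound $b^{N-M}$ degenerates. When $b=1$ this bound is trivial, and one must instead extract a strictly sub-unit factor from the $c$-weighted terms in Lemma \ref{imp2} (e.g.\ the $c^2a^{s-1}b^{L-1}$ or $ca^{s-1}b^K$ contributions), exploiting $c<1$. A further subtlety, analogous to Case 3 in Proposition \ref{case1}, may arise when $a=1$: there I would restrict attention to $T^n(A_n)$, which lies in a proper subinterval of $[0,1]$ on which $\max|T'|$ is uniformly less than $1$, and use that auxiliary contraction rate to close the argument.
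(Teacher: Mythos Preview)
Your outline follows the scaffolding of Proposition~\ref{case1}, but two of its load-bearing steps do not go through here.

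\textbf{The case $c=0$.} The conjugation idea fails for \eqref{RT2}. First, with $c=0$ the map $R$ is $x/((1-b)x+b)$, not $bx/((b-1)x+1)$ (the latter is the $c=0$ form of \eqref{rt}); these are inverses, and the attracting fixed point of the correct $R$ is $0$. More to the point, $T(x)=a/(x+a)$ is decreasing with pole $-a$, while $R$ has pole $b/(b-1)$; a single M\"obius change of variable can send at most one pole to $\infty$, so $(R,T)$ cannot be conjugated to an affine pair. The paper does something quite different: it picks $n$ large so that the fixed point $\theta$ of $R^nT$ satisfies $(R^nT)(1)\le R(\theta)$, and then observes that the \emph{increasing} pair $(R,(R^nT)^2)$ on $[0,\theta]$ satisfies the hypotheses of Proposition~\ref{case1}; density on $[0,\theta]$ plus $R^nT([0,\theta])=[\theta,1]$ finishes it.

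\textbf{The main case $c>0$.} Your two bounds do not compete. Applying Lemma~\ref{imp2} to the full word at $x=0$ and keeping only the $a^sb^M$ term gives $1\le b^{N-M}$; keeping instead the $c^2a^{s-1}b^{L-1}$ term (with $L\ge N$) gives $b^{N-M}\le a^2b^2c^{-4}$. So Lemma~\ref{imp2} alone yields only that $N-M$ lies in a fixed bounded interval --- no contradiction. Your ``Bound~2'' (the crude product $\max|R'|^{M+N}\max|T'|^{\sum\beta_i}$) constrains $M+N$ against $\sum\beta_i$, which is an entirely different pair of quantities and does not interact with $N-M$. (Note also that here $a>0$ is arbitrary, so $\max|T'|=1/a$ may exceed $1$, and $\max|R'|$ may exceed $1$ as well; the Case~2 argument of Proposition~\ref{case1} used $a,b\ge 1$ in an essential way.) The paper's actual mechanism is different: it factors the word into consecutive \emph{pairs} $R^{\alpha_{2i-1}}T^{\beta_{2i-1}}R^{\alpha_{2i}}T^{\beta_{2i}}$ and bounds each via the explicit formula \eqref{hprime}, obtaining
\[
1\le u^2\prod_{i=1}^{s}\frac{b^{\alpha_{2i}-\alpha_{2i-1}}}{\bigl(1+(c^2/ab^2)\,b^{\alpha_{2i}}\bigr)^2}\,.
\]
The product in the denominator diverges with $s$, which forces $N-M\to\infty$. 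Repeating the pairing shifted by one block (odd $k$) then forces $\alpha_{2s+1}\to\infty$ and, from the same inequality, $\alpha_{2s+1}$ bounded --- the contradiction. The key idea you are missing is this pairwise decomposition, which preserves the multiplicative $(1+\cdot)$ factors that a single application of Lemma~\ref{imp2} to the full word throws away into the unspecified $u,v\ge 0$.
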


\begin{proof}
 Since $0,1 \in \overline \Omega_{x}$ for any $x\in [0,1]$, we only need to show that $\Omega$ is dense. On the contrary, suppose $\Omega$ is not dense and we will derive a contradiction. By Lemmas \ref{seq} and \ref{seq2}, we have a sequence of positive integers $\{\alpha_i\}_{i=1}^\infty$ and positive odd integers $\{\beta_i\}_{i=1}^\infty$ so that:
 \begin{equation} \label{maxder}
\Lambda_k \doteq  \max_{x\in [0,1]} \left |(T^\beta R^{\alpha_1}T^{\beta_1} \ldots R^{\alpha_k}T^{\beta_k})^\prime(x) \right |  \geq 1~.
\end{equation}
for all $k\geq 1$. If $b=1$, then \eqref{even} and \eqref{maxder} are in direct contradiction for $k$ even, since the denominator in \eqref{even} contains $a^sb^M=a^s$ while the numerator is $a^kb^{M+N}=a^k$, and so $f^\prime(x)<1$ (recall that $c>0$ if $b=1$). Thus, suppose $b>1$ in the remainder of the proof. Let $u=\max\{1,1/\sqrt a\}$ and note that $\max |(T^\beta)^\prime(x)| \leq u^2$. Then for $k=2s$, we conclude from \eqref{hprime} that
\begin{eqnarray} \nonumber
\Lambda_{2s} &\leq& u^2\prod_{i=1}^{s} \max \left |(R^{\alpha_{2i-1}}T^{\beta_{2i-1}}  R^{\alpha_{2i}}T^{\beta_{2i}})^\prime(x) \right |\\ \nonumber
& \leq & u^2\prod_{i=1}^{s} {{b^{\alpha_{2i}-\alpha_{2i-1}} }\over {( 1+(c^2/a)b^{\alpha_{2i}-2}} )^2} ~.
 \end{eqnarray}
Let $M=\sum_{i=1}^s \alpha_{2i-1}$ and $N=\sum_{i=1}^s \alpha_{2i}$. It follows that
\begin{equation}ub^{(N-M)/2} \geq \prod_{i=1}^s \left (1+{{c^2} \over {ab^2}} b^{\alpha_{2i}} \right ) \geq 1+{{c^2} \over {ab^2}} \sum_{i=1}^s b^{\alpha_{2i}}~.\end{equation}
We conclude that $N-M \geq 0$ for all $s\geq 1$ and $N-M \rightarrow \infty$ as $s \rightarrow \infty$. 
We will repeat this analysis for $k=2s+1$. In this case, we have:
\begin{eqnarray} \nonumber
\Lambda_{2s+1} &\leq& u^2\max |(R^{\alpha_1}T)^\prime(x)|\prod_{i=1}^{s} \max \left |(R^{\alpha_{2i}}T ^{\beta_{2i}} R^{\alpha_{2i+1}}T^{\beta_{2i+1}})^\prime(x) \right |\\ \nonumber
& \leq& {{u^2ab^{\alpha_1}} \over {(a+cb^{\alpha_1-1})^2}}\prod_{i=1}^{s} {{b^{\alpha_{2i+1}-\alpha_{2i}} }\over {( 1+(c^2/a)b^{\alpha_{2i+1}-2}} )^2} ~.
\end{eqnarray}
It follows that
\begin{equation}\label{fin}
ub^{(\alpha_{2s+1}+M-N)/2} \geq a^{1/2} \prod_{i=1}^s \left (1+ {{c^2} \over {ab^2}}b^{\alpha_{2i+1}} \right ) \geq \left ({{c^2} \over {\sqrt ab^2}} \right )\sum_{i=1}^s b^{\alpha_{2i+1}} \end{equation}
In particular, we should have $\alpha_{2s+1} \rightarrow \infty$ as $s \rightarrow \infty$. But the same inequality \eqref{fin} implies that 
\begin{equation}b^{\alpha_{2s+1}/2} \geq \left ({{c^2} \over {u\sqrt ab^2}} \right ) b^{\alpha_{2s+1}}~,\end{equation}
which implies that $\alpha_{2s+1}$ is bounded. This is a contradiction, and so we have proved the proposition in the case of $c>0$.

Next, we deal with the case $c=0$ as well. Choose $n$ large enough so that
\begin{equation}\theta ={{-a+\sqrt{a^2+4ab^n}} \over {2b^n}}={{2a} \over {a+\sqrt{a^2+4ab^n}}} <{1 \over b}~,\end{equation}
where $\theta$ is the positive fixed point of $R^nT$. It follows that $(R^nT)(1) \leq R(\theta)$. It is then readily checked that the pair $(R, (R^nT)^2)$ satisfy all of the conditions of Proposition \ref{case1} on the interval $[0,\theta]$, and so the orbits of $\langle R,(R^nT)^2 \rangle$ are dense on $[0,\theta]$. But clearly $R^nT[0,\theta]=[\theta, 1]$ which implies that $\Omega$ is dense. This completes the proof of the proposition.
\end{proof}

Part (ii) of Theorem \ref{1} follows from the proposition below.
\begin{prop}\label{prop22}
Let $f,g \in {\mathcal F}$ so that $f$ is increasing and $g$ is decreasing. Then the orbit of a given $x\in [0,1]$ is dense in $[0,1]$ if and only if $\{o(f), g(o(f))\}= \{0,1\}$ and one of the following occurs
\begin{itemize}
\item[i)] $g$ is not onto and $Im(f) \cup Im(g)=[0,1]$.
\item[ii)] $g$ is onto, $f$ is not onto, and $Im(f) \cup Im (gf)=[0,1]$.
\end{itemize}

\end{prop}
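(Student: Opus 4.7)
\noindent\emph{Proof plan.} The strategy is to reduce both directions to Proposition~\ref{case2}, in close parallel with the proof of Proposition~\ref{prop11}.

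For necessity, suppose $\Omega_{x_0}$ is dense. Every nontrivial orbit point $h(x_0)$ lies in $Im(L)$, where $L$ is the outermost (first-applied) letter of $h$. In case~(i), where $g$ is not onto, this immediately forces $Im(f)\cup Im(g)=[0,1]$. In case~(ii), a direct computation shows that any onto decreasing M\"obius self-map of $[0,1]$ is an involution, so $g^2=\mathrm{id}$. Any word beginning in $g$ therefore either has the form $gfh''$ (so $h(x_0)\in Im(gf)$), or $ggh''$ (so $h(x_0)=h''(x_0)$ via a strictly shorter word), or equals the single letter $g$. By induction on word length, every orbit point outside $\{x_0,g(x_0)\}$ lies in $Im(f)\cup Im(gf)$, forcing the case-(ii) image condition. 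The endpoint condition $\{o(f),g(o(f))\}=\{0,1\}$ follows as in the final paragraph of the proof of Proposition~\ref{prop11}: if $o(f)\in(0,1)$, or if $o(f)\in\partial[0,1]$ but $g(o(f))$ is not the opposite endpoint, then $f$-iterates prevent the orbit from approaching at least one endpoint.

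For sufficiency in case~(i), after possibly replacing $x\mapsto 1-x$ assume $o(f)=0$ and $g(0)=1$. The one-parameter family $\theta(x)=x/(ux+1-u)$ of M\"obius automorphisms of $[0,1]$ fixing $\{0,1\}$ preserves this normalization; we choose $u$ so that the conjugated $f$ and $g$ become respectively the canonical maps $R$ and $T$ of~\eqref{RT2}. A parameter count confirms this is possible: after normalizing $f(0)=0$ and $g(0)=1$ the pair $(f,g)$ has four free parameters, and modding out by the one-parameter family $\theta$ leaves three, matching the three parameters $a,b,c$ of $(R,T)$. The attracting hypothesis $f'(0)<1$ translates to $b\ge 1$; the non-onto hypothesis on $g$ translates to $a>0$; the image condition $Im(f)\cup Im(g)=[0,1]$ (equivalently $f(1)\ge g(1)$) becomes $c\le 1$; and when $c=0$ the nontriviality of the conjugated $f$ forces $b>1$. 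Proposition~\ref{case2} now yields the density of $\Omega_{x_0}$.

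For sufficiency in case~(ii), observe that $\langle f,gf\rangle\subseteq\langle f,g\rangle$, so it suffices to show that $\langle f,gf\rangle$ already has dense orbits. The pair $(f,gf)$ satisfies the hypotheses of case~(i): $gf$ is decreasing (being increasing composed with decreasing) and is not onto because $gf(1)=g(f(1))>g(1)=0$ (using that $f$ is not onto); the endpoint condition $\{o(f),(gf)(o(f))\}=\{o(f),g(o(f))\}=\{0,1\}$ holds; and the image condition $Im(f)\cup Im(gf)=[0,1]$ is precisely the case-(ii) hypothesis. The main technical obstacle is the parameter matching in case~(i), namely explicitly choosing $u$ so that the M\"obius conjugate of $(f,g)$ is exactly the canonical pair~\eqref{RT2}; this is the decreasing-$g$ analogue of the normalization step in Proposition~\ref{prop11}(i). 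The boundary situation $c=1$ (where $Im(f)$ and $Im(g)$ meet at only a single point) may require a brief separate treatment, since Proposition~\ref{case2} explicitly assumes $c<1$.
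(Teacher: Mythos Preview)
Your plan for part (i) and for the necessity direction runs parallel to the paper's, but your treatment of sufficiency in case (ii) takes a genuinely different route. The paper reduces (ii) not to case (i) of the present proposition but back to Proposition~\ref{prop11}: since $g$ is an onto decreasing M\"obius map it is an involution, so $gfg$ is increasing with $o(gfg)=g(o(f))$, and one checks $Im(gfg)=Im(gf)$; thus the pair $(f,gfg)$ lands in the ``both increasing'' situation already handled. Your reduction via $(f,gf)$ is equally valid and arguably tidier, since it stays within the increasing/decreasing framework and the verification that $(f,gf)$ satisfies the case-(i) hypotheses is immediate from the case-(ii) data (indeed $Im(gf)=Im(gfg)$, so the two image conditions coincide). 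The paper's route has the mild advantage of appealing to a proposition that is logically prior and fully proved, whereas yours bootstraps from case~(i) of the proposition currently under proof; this is fine provided you establish (i) first, as you do.

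One omission in your necessity sketch: you must also exclude the possibility that $f$ and $g$ are \emph{both} onto. In that situation the endpoint condition $\{o(f),g(o(f))\}=\{0,1\}$ can still hold, yet the proposition asserts density fails. The paper handles this in a line: an onto increasing $f\in\mathcal F$ and an onto decreasing $g\in\mathcal F$ necessarily commute (both conjugate via $x\mapsto 1/x-1$ to dilations and $x\mapsto 1/x$ on $(0,\infty)$), so with $g^2=\mathrm{id}$ the semigroup reduces to $\{f^m,\,f^mg:m\ge 0\}$ and every orbit accumulates only at $0$ and $1$. Finally, your flag on $c=1$ is well taken; the paper's own proof glosses over this boundary case just as yours would, so it is not a defect of your plan relative to the paper.
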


\begin{proof}
Proof of parts (i) is straightforward and follow from Proposition \ref{case2}. For part (ii), note that the pair $(f,gfg)$ satisfies the conditions of Proposition \ref{prop11}, since $o(gfg)=1-o(f)$, and so $\{o(f), o(gfg)\}=\{0,1\}$. 

To prove the converse of (ii), suppose $g$ is onto, $f$ is not onto, and the orbit of some $x\in [0,1]$ is dense in $[0,1]$. We will show that $Im(f) \cup Im(gf)=[0,1]$ and $o(f) \in \{0,1\}$. We first show that $f(0)=0$ or $f(1)=1$. Otherwise, the orbit of $x$ is contained in the interval $[\min U, \max U]$, where $U=\{f(0),f(1),x,gf(0),gf(1),g(x)\}$ and so it cannot be dense. Without loss of generality, suppose $f(0)=0$ (otherwise, we can use the change of variable $x\rightarrow 1-x$ to arrive at this assumption). Next, we show that $o(f)=0$, since otherwise $f(y) \geq y$ for all $y$ and so the orbit of $x$ would not contain any $y<\min\{x,g(x)\}$. It is left to show that $Im(f) \cup Im(gf)=[0,1]$. For every $\epsilon>0$, there should exist an element of the orbit of $x$ which is contained in the interval $(f(1),f(1)+\epsilon)$. Let $w$ be a word in $f$ and $g$ such that $w(x) \in (f(1),f(1)+\epsilon)$. Clearly $w=gw_1$ for some word $w_1$. By making $\epsilon$ smaller, we can assume $w_1$ is not the empty word. Since $g^2=id$, we should have $w_1=fw_2$ and so $w=gfw_2$ for some word $w_2$. In particular $Im(gf) \cap (f(1), f(1) +\epsilon) \neq \emptyset$ for all positive $\epsilon$ small enough. It follows that $gf(1) \leq f(1)$, which implies that $Im(f) \cup Im(gf)=[0,1]$.

Next, we show that if an orbit $\Omega_x$ is dense, then $f$ and $g$ cannot be both onto. On the contrary, suppose $f$ and $g$ are both onto. Then $g^2=id$ and $fg=gf$. It follows that the elements of $\langle f,g\rangle$ are $g$, $f^m$, and $f^mg$ for $m\geq 1$. In particular, the only accumulation points of the orbit $\Omega_x$ are 0 and 1, and so it cannot be dense. 
\end{proof}

\section{Case III}

In this section, we consider the case where both $f,g\in {\mathcal F}$ are decreasing. We first have a lemma giving us upper bounds on the derivatives of the elements in $\langle R,T \rangle$, where $R$ and $T$ in this section are defined by
\begin{equation}\label{RT3}
R(x)={{a(1-x)} \over {(b-a-c)x+a+c}}~,~T(x)={{a} \over {x+a}}~.
\end{equation}

\begin{lemma}\label{imp3}
If $m+n$ is even, then
\begin{equation}\max_{x \in [0,1]} |(R^{m}TR^{n}T)^\prime(x)| \leq \max \left \{ {1 \over {(1+a)^2}}, {1 \over {(b+c)^2}} \right \}~,\end{equation}
and if $m+n$ is odd, then
\begin{equation}\label{case3odd}
\max_{x\in [0,1]} |(R^{m}TR^{n}T)^\prime(x)| \leq {{ab} \over {(ab+bc)^2}}~.
\end{equation}
\end{lemma}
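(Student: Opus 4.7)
The plan is to treat $R^m T R^n T$ as a single Möbius transformation $\varphi(x)=(Ax+B)/(Cx+D)$ and to bound the denominator of its derivative. Representing $R$ and $T$ by the matrices
$$M_R = \begin{pmatrix} -a & a \\ b-a-c & a+c \end{pmatrix}, \qquad M_T = \begin{pmatrix} 0 & a \\ 1 & a \end{pmatrix},$$
with $\det M_R = -ab$ and $\det M_T = -a$, multiplicativity of the determinant gives $\det(M_R^m M_T M_R^n M_T) = (-1)^{m+n}a^{m+n+2}b^{m+n}$. The standard derivative formula then yields $|\varphi'(x)| = a^{m+n+2}b^{m+n}/(Cx+D)^2$. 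Since $\varphi$ has no pole in $[0,1]$, $Cx+D$ has constant sign there and its square is minimized at an endpoint, so
$$\max_{x\in[0,1]}|\varphi'(x)| = \frac{a^{m+n+2}b^{m+n}}{\min(D^2,(C+D)^2)}.$$
The lemma thus reduces to establishing lower bounds on $|D|$ and $|C+D|$.

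To access these entries I would use the Cayley--Hamilton-type identity $M_R^k = \alpha_k M_R + ab\alpha_{k-1}I$, where $\alpha_k$ satisfies $\alpha_{k+1}=c\alpha_k + ab\alpha_{k-1}$ with $\alpha_0=0$, $\alpha_1=1$ (and $\alpha_{-1}=1/(ab)$, so the identity also handles $k=0$). Under the positivity of $a,b,c$, the $\alpha_k$ are positive for $k\ge 1$, and a direct expansion gives
$$M_R^k M_T = \begin{pmatrix} a\alpha_k & a^2 b\alpha_{k-1} \\ (a+c)\alpha_k + ab\alpha_{k-1} & ab(\alpha_k + a\alpha_{k-1}) \end{pmatrix}.$$
Multiplying the bottom row of $M_R^m M_T$ into $M_R^n M_T$ produces an explicit polynomial expression for $(C,D)$; applying the same computation to $(M_R^m M_T)(M_R^n M_T)\cdot (1,1)^\top$ produces $C+D$.

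The last step is to extract the correct dominant summand from these expansions. In the even case, all summands in the bottom row carry matching signs, and the ``diagonal'' term $ab(\alpha_m+a\alpha_{m-1})\cdot ab(\alpha_n+a\alpha_{n-1})$ dominates $D$; using the recurrence repeatedly to bound $\alpha_k + a\alpha_{k-1}$ from below yields $|D|\ge a^{(m+n)/2+1}b^{(m+n)/2}(1+a)$, and an analogous argument gives $|C+D|\ge a^{(m+n)/2+1}b^{(m+n)/2}(b+c)$, whence the bound $\max\{(1+a)^{-2},(b+c)^{-2}\}$. In the odd case a sign change in the recurrence collapses one of these estimates, but the combined weaker bound $\min(|D|,|C+D|)\ge a^{(m+n+1)/2}b^{(m+n+1)/2}(a+c)$ survives and delivers the bound $ab/(ab+bc)^2$. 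The main obstacle will be the algebraic bookkeeping in extracting these dominant summands from the many cross-terms in the expansion; an induction on $m+n$ that mirrors the recurrence for $\alpha_k$ should streamline the argument, since each inductive step reduces to multiplication by $M_R$ or $M_T$ and preserves the positivity structure of the lower bound.
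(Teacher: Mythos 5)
Your reduction is correct as far as it goes: writing $\varphi=R^mTR^nT$ as a M\"obius map with matrix $M_R^mM_TM_R^nM_T$, using $\det M_R=-ab$, $\det M_T=-a$ to get $|\varphi'(x)|=a^{m+n+2}b^{m+n}/(Cx+D)^2$, and observing that the maximum over $[0,1]$ is $a^{m+n+2}b^{m+n}/\min\{D^2,(C+D)^2\}$ is all fine, and the Cayley--Hamilton identity $M_R^k=\alpha_kM_R+ab\,\alpha_{k-1}I$ with $\alpha_{k+1}=c\alpha_k+ab\,\alpha_{k-1}$ is a genuinely cleaner bookkeeping device than the paper's induction on canonical forms of $(R^mT)'$. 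The problem is that the entire content of the lemma now sits in the two lower bounds $|D|\geq a^{(m+n)/2+1}b^{(m+n)/2}(1+a)$ and $|C+D|\geq a^{(m+n)/2+1}b^{(m+n)/2}(b+c)$, and for these you offer only the phrase ``using the recurrence repeatedly\ldots yields.'' That step does not go through: the claimed bounds are false.

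Take $m=n=1$. Then $M_RM_T=\left(\begin{smallmatrix} a & 0\\ a+c & ab\end{smallmatrix}\right)$, so $(M_RM_T)^2$ has $D=a^2b^2$, and your claimed bound $a^2b^2\geq a^2b(1+a)$ requires $b\geq 1+a$, which is not implied by any hypothesis in force ($a>0$, $b\geq 1\geq c\geq 0$). Concretely, $RTRT(x)=ax/\bigl((a+c)(1+b)x+ab^2\bigr)$, so $\max_{[0,1]}|(RTRT)'|=1/b^2$, which exceeds $\max\{(1+a)^{-2},(b+c)^{-2}\}$ whenever $c>0$ and $b<1+a$; for $a=b=1$, $c=1/2$ the left side is $1$ and the right side is $4/9$. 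So no amount of algebraic bookkeeping will close your gap: the inequality you are trying to prove fails at this point, and the case $m=n=1$ is squarely within the range needed for Proposition \ref{case3} (where the exponents $\alpha_i$ are arbitrary positive integers). You should be aware that the paper's own argument founders at the same place: it asserts by induction that $(R^mT)'(x)=b/(b+c+x+v+ux)^2$ with $u,v\geq 0$ for odd $m$, whereas the actual value for $m=1$ is $(RT)'(x)=b/\bigl(b+(1+c/a)x\bigr)^2$, whose constant term is $b$, not $b+c+v$. Either the statement needs an extra hypothesis (e.g.\ $c=0$, or a rebalanced bound such as $1/b^2$ in the doubly-odd subcase) or the downstream contraction argument needs to be reorganized; as written, neither your proposal nor the paper's proof establishes the lemma.
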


\begin{proof}
A simple induction shows that for even $m$ there exist $\gamma, \lambda, u,v \geq 0$ such that
$$R^mT(x)={{a+\gamma +\lambda x} \over {(a+c+x+v+ux)^2}}~,~(R^mT)^\prime(x)={{a} \over {(a+c+x+v+ux)^2}}~.$$
And if $m$ is odd, then there exist $\gamma, \lambda, u,v \geq 0$ such that
$$R^mT(x)={{c+x+\gamma +\lambda x} \over {b+c+x+v+ux}}~,~(R^mT)^\prime(x)={b \over {(b+c+x+v+ux)^2}}~.$$
Now suppose both $m$ and $n$ are even. It follows from these equations and some algebraic simplifications that $(R^mTR^nT)^\prime(x)\leq {1 /{(1+a)^2}}$, while if $m$ and $n$ are both odd, we have $(R^mTR^nT)^\prime(x)\leq {1 /{(b+c)^2}}$. The inequality \eqref{case3odd} follows similarly.\end{proof}

\begin{prop}\label{case3}
Let $a>0$ and $b \geq 1 \geq c \geq 0$. Moreover, suppose that $b>1$ if $c=0$. Let $R$ and $T$ be the maps given by \eqref{RT3}. Then the orbit of any $x \in [0,1]$ is dense in $[0,1]$ under the action of the semigroup generated by $R$ and $T$.
\end{prop}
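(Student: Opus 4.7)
The plan is to follow the template established in Proposition \ref{case2}. First I would show that $\{0,1\} \subseteq \overline\Omega_x$ for every $x \in [0,1]$: a direct computation gives $RT(x) = ax/((a+c)x+ab)$, so the substitution $y = 1/x$ turns the iteration of $RT$ into the affine map $y_{n+1} = b y_n + (a+c)/a$. Under the hypothesis ($b \geq 1$, with $b > 1$ when $c = 0$) this sends $y_n \to \infty$, hence $(RT)^n(x) \to 0$; the symmetric computation yields $(TR)^n(x) \to 1$. It therefore suffices to show that the orbit $\Omega := \Omega_0$ of $0$ is dense in $[0,1]$.

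Supposing it is not, let $A$ be a maximum-length open gap in $(0,1) \setminus \overline\Omega$. Since $c \leq 1$, the images $\mathrm{Im}(R) = [0, a/(a+c)]$ and $\mathrm{Im}(T) = [a/(a+1), 1]$ overlap and cover $[0,1]$, and their boundary points $a/(a+c) = R(0)$ and $a/(a+1) = T(1)$ both lie in $\overline\Omega$. In the spirit of Lemmas \ref{seq}--\ref{seq2}, I would construct inductively a sequence of positive integers $(\alpha_i, \beta_i)$ such that for every $k \geq 1$,
\[ A = R^{\alpha_1} T^{\beta_1} R^{\alpha_2} T^{\beta_2} \cdots R^{\alpha_k} T^{\beta_k}(A_k), \]
with $A_k$ a further gap; at each stage $\alpha_i$ and $\beta_i$ are chosen as large as possible subject to the preimage staying inside $[0,1]$, and disjointness from $\overline\Omega$ forces the preimage into the image of the opposite generator so the induction proceeds.

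To derive the contradiction, decompose each $T^{\beta_i}$ as $T \cdot (R^0 T)^{\beta_i - 1}$ and group consecutive $T$-terminated blocks in pairs, so that $W_k$ becomes a product of pairs of the form $R^{m_{2j-1}} T R^{m_{2j}} T$. Lemma \ref{imp3} bounds each such pair's derivative by the even-parity constant $\rho_1 := \max\{(1+a)^{-2}, (b+c)^{-2}\}$ --- strictly less than $1$, because $a > 0$ and the hypothesis forces $b+c > 1$ --- or by the odd-parity constant $\rho_2 := ab/(ab+bc)^2$. When $\rho_2 < 1$ the product of pair bounds decays geometrically, so $\max_{x\in[0,1]}|W_k'(x)| \to 0$; combined with $|A| \leq \max|W_k'| \cdot |A_k|$ and the maximality $|A_k| \leq |A|$, this forces $|A| = 0$, a contradiction.

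The main obstacle is the parameter regime where $\rho_2 \geq 1$ (this happens, for example, when $bc$ is small, including the edge case $c = 0$). In that regime I would follow the endgame of the proof of Proposition \ref{case2}: choose $n$ large enough that the positive fixed point $\theta$ of $R^n T$ satisfies $\theta < 1/b$, verify that the pair $(R^2, (R^n T)^2)$ of increasing maps satisfies the hypotheses of Proposition \ref{case1} on the subinterval $[0,\theta]$, and then exploit $R^n T([0,\theta]) = [\theta,1]$ to propagate density from $[0,\theta]$ onto all of $[0,1]$.
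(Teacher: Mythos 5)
Your opening step (the computation $RT(x)=ax/((a+c)x+ab)$, hence $(RT)^n(x)\to 0$, and the reduction to the orbit of $0$) and the pullback construction of $A=R^{\alpha_1}T^{\beta_1}\cdots R^{\alpha_k}T^{\beta_k}(A_k)$ are fine and parallel the paper. The first genuine gap is in the contradiction step: you regroup an arbitrary word into pairs $R^{m}TR^{n}T$ in which some exponents are $0$, and then invoke Lemma \ref{imp3}. But that lemma's bounds rest on normal forms for $R^mT$ valid only for $m\geq 1$ (for $m=0$ the stated form would force $v=-c<0$), and they genuinely fail at $m=0$: a direct computation gives $|(TRT)'(x)|=ab/((1+a+c)x+ab)^2$, whose maximum $1/(ab)$ exceeds your $\rho_2=ab/(ab+bc)^2$ whenever $c>0$, and exceeds $1$ whenever $ab<1$. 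Such blocks can arise exactly when some $\beta_i\geq 2$, i.e.\ when $c>a/(a+c)$; for instance $(a,b,c)=(1/2,3/2,4/5)$ satisfies all hypotheses, has $c>a/(a+c)$, $\rho_1,\rho_2<1$, and $ab<1$, so your product of pair-bounds need not tend to $0$ and the geometric-decay argument breaks. The paper avoids this by splitting on $c\lessgtr a/(a+c)$: when $c\leq a/(a+c)$ the pullback automatically yields $\beta_i=1$ for every $i$ (as in Lemma \ref{seq}), so only pairs with $m,n\geq 1$ occur; when $c>a/(a+c)$ it abandons the infinite product entirely, takes $A$ maximal in $(0,a/(a+c))\setminus\overline\Omega$, writes $A=R^{\alpha}T^{\beta}(B)$ with $B$ in the same subinterval, and verifies $|(R^{\alpha}T^{\beta})'|<1$ on $[0,1]$ directly, which reduces to the three cases $(\alpha,\beta)\in\{(1,1),(1,2),(2,1)\}$.

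The second gap is your treatment of the regime where the odd-parity constant is not below $1$. The paper disposes of this at the outset: if $ab+c\leq 1$ then $RT(1)\geq T(1)$, so the pair $(RT,T)$ (one increasing with attracting fixed point $0$, one decreasing) satisfies Proposition \ref{case2} and the claim follows; in the remaining regime $ab+c>1$ one gets $ab/(ab+c)^2<1$, so all the constants are uniformly below $1$ with no further cases. Your substitute -- importing the $c=0$ endgame of Proposition \ref{case2} with the pair $(R^2,(R^nT)^2)$ -- is left unverified, and its key geometric ingredient $R^nT([0,\theta])=[\theta,1]$ holds only because $R^nT(0)=R^n(1)=1$ when $c=0$; for $c>0$ one has $R(1)=0$ and $R^2(0)=ac/(ab+c(a+c))\neq 0$, so neither that identity nor the hypotheses of Proposition \ref{case1} (which require, after normalization, a map fixing each endpoint) are available, while your bad regime $a\geq b(a+c)^2$ does contain parameters with $c>0$. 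The clean repair is the paper's reduction via $(RT,T)$.
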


\begin{proof}
Consider the pair $(RT,T)$. We see that $RT(0)=0$ and $RT(1)=a/(a+ab+c)$. If $RT(1) \geq T(1)$, which is equivalent to $ab+c \leq 1$ , then the claim follows from Proposition \ref{case2}. Thus, for the rest of the proof, we assume that $ab+c>1$. In particular, we have
\begin{equation}\label{ineqabc}
{{ab} \over {(ab+c)^2}}<{{ab} \over {(ab+c)}}\leq 1~.\end{equation}
We divide the proof into two cases:
\\

Case 1. Suppose $c \leq a/(a+c)$. Let $A$ be the maximum-length interval in $(0,1) \backslash \overline \Omega$, where $\Omega$ is the orbit of 0. Similar to the proof of Lemma \ref{seq}, one shows that there exist $\beta \in \{0,1\}$ and a sequence $\{\alpha_i\}_{i=1}^\infty$ such that for each $k\geq 1$ there exists $A_k \subseteq (0,1) \backslash \overline \Omega$ with $A=T^\beta R^{\alpha_1}TR^{\alpha_2}T\ldots R^{\alpha_k}T(A_k)$. Let 
$$\delta=\max \left \{ {1 \over {(1+a)^2}}, {1 \over {(b+c)^2}}, {{ab} \over {(ab+c)^2}} \right \}<1~.$$
The fact that $\delta<1$ follows from the facts that $a>0$, $b>1$ if $c=0$, and \eqref{ineqabc}. Now, Lemma \ref{imp3} implies that for $k$ large enough
\begin{equation}\max |(T^\beta R^{\alpha_1}T^\beta R^{\alpha_2}T\ldots R^{\alpha_k}T)^\prime(x)|\leq \max \{1,{1/ a}\} \cdot \delta^k<1, \end{equation}
which is in contradiction with $A$ being the maximum-length interval in $(0,1) \backslash \overline \Omega$.  
\\

Case 2. Suppose $c>a/(a+c)$. Let $A$ be the maximum-length interval in $(0,a/(a+c)) \backslash \overline \Omega$. Then $A=R^{\alpha}T^{\beta}(B)$ for some $B\subseteq (0,a/(a+c))$. We show that $|(R^{\alpha}T^{\beta})^\prime(x)|<1$ for all $\alpha,\beta>0$ and $x\in [0,1]$. This will complete the proof. Since $|(T^m)^\prime(x)|<1$ and $|(R^m)^\prime(x)|<1$ for $m>1$, it is sufficient to prove the claim for $(\alpha,\beta)=(1,1), (1,2),(2,1)$. We go through the list:
\begin{equation}(RT)^\prime(0)=1/b<1~,~(RT)^\prime(1)=(a^2b)/(a+ab+c)^2<1~,\end{equation}
\begin{eqnarray}(RT^2)^\prime(0)&=&{{ab} \over {(a+ab+c)^2}}<{{ab} \over {(ab+c)^2}}<1\\
(RT^2)^\prime(1)&=&{{ab} \over {(a+b+ab+c)^2}}<1~.\\
|(R^2T)^\prime(0)|&=&{a \over {(a+c)^2}}<{c \over {a+c}}<1~.\\
|(R^2T)^\prime(1)|&=&{{a^3b^2} \over {(a^2b+c^2+ab+ac+abc)^2}}<{{a^3b^2} \over {(a^2b+ab)^2}}<1
\end{eqnarray}
This completes the proof of Proposition \ref{case3}.
\end{proof}

Part (iii) of Theorem \ref{1} follows from the proposition below.

\begin{prop}\label{prop33}
Let $f,g \in {\mathcal F}$ be both decreasing. Then the orbit of a given $x\in (0,1)$ is dense in $[0,1]$ if and only if $\{o(fg), o(gf)\}=\{0,1\}$ and one of the following occurs
\begin{itemize}
\item[i)] Neither one of $f$ or $g$ is onto and $Im(f) \cup Im(g)=[0,1]$.
\item[ii)]  Exactly one of $f$ or $g$ is onto and $Im(fg) \cup Im(gf)=[0,1]$.
\end{itemize}
\end{prop}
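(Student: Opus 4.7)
The plan is to prove sufficiency by reducing each of (i) and (ii) to results already established (Proposition~\ref{case3} and part~(i) of the present proposition), and to prove necessity by trapping arguments analogous to those used in Propositions~\ref{prop11} and~\ref{prop22}.

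For sufficiency of~(i), I assume (after relabeling) $o(fg)=0$ and $o(gf)=1$. I apply a Möbius change of coordinates on $[0,1]$ that simultaneously normalizes $g$ into the form $T(x)=a/(x+a)$ of~\eqref{RT3}; the identities $fg(0)=0$ and $gf(1)=1$ together with $g=T$ then force $f$ to take the form $R(x)=a(1-x)/((b-a-c)x+a+c)$. The hypothesis $Im(f)\cup Im(g)=[0,1]$ translates into $R(0)=a/(a+c)\geq T(1)=a/(a+1)$, i.e.\ $c\leq 1$, and $c\geq 0$ follows because $g$ maps into $[0,1]$. Computing $(fg)'(0)=1/b$, the attracting condition $o(fg)=0$ gives $b\geq 1$, with $b>1$ when $c=0$ (otherwise $fg$ is parabolic and not attracting from both sides). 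Proposition~\ref{case3} then delivers the conclusion.

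For sufficiency of~(ii), assume without loss of generality that $g$ is onto and $f$ is not. A direct computation shows that every decreasing Möbius transformation of $[0,1]$ onto itself is an involution, so $g^2=id$. I consider the pair $(f,gfg)$: both maps are decreasing, neither is onto (the second is $g$-conjugate to $f$), and using $g^2=id$,
\begin{equation*}
Im(f)\cup Im(gfg)=Im(fg^{2})\cup Im(gf\cdot g)=Im(fg)\cup Im(gf)=[0,1].
\end{equation*}
Moreover $f\cdot gfg=(fg)^{2}$ and $gfg\cdot f=(gf)^{2}$ share attracting fixed points with $fg$ and $gf$, so $\{o(f\cdot gfg),o(gfg\cdot f)\}=\{o(fg),o(gf)\}=\{0,1\}$. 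Part~(i) then gives density of every orbit of $\langle f,gfg\rangle\subseteq\langle f,g\rangle$.

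For necessity, suppose $\Omega_{x}$ is dense. The inclusion $\Omega_{x}\setminus\{x\}\subseteq Im(f)\cup Im(g)$ immediately gives $Im(f)\cup Im(g)=[0,1]$. If both $f$ and $g$ were onto, then $f^{2}=g^{2}=id$, so every element of $\langle f,g\rangle$ reduces to one of $(fg)^{n}$, $(gf)^{n}$, $f(gf)^{n}$, $g(fg)^{n}$; since $fg$ is an increasing Möbius map, the sequence $(fg)^{n}(x)$ has at most two accumulation points, and the full orbit has at most four, contradicting density. To rule out $o(fg)=p\in(0,1)$, I note that since $g$ is a decreasing homeomorphism, $g(p)$ is attracting for $gf$, and the derivative identity $(fg)'(p)=(gf)'(g(p))$ shows $o(gf)=g(p)\in(0,1)$; since $\langle f,g\rangle$ contains no inverses and both $fg,gf$ are strict contractions toward interior points, an analysis in the spirit of the maximum-gap argument of Proposition~\ref{case1} (applied to the semigroup of increasing even-length words, combined with the swap induced by $f$ and $g$) shows $\Omega_{x}$ is confined to a proper compact subinterval, contradicting density. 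In case~(ii), the condition $Im(fg)\cup Im(gf)=[0,1]$ is extracted by a word-reduction argument as in the converse of Proposition~\ref{prop22}~(ii): any orbit point in a small neighbourhood of a boundary point $y\notin Im(f)$ must come from a reduced word of the form $gfw$ (using $g^{2}=id$ to absorb leading $g^{2}$'s), forcing $y\in \overline{Im(gf)}$. The main technical obstacle is the trapping step for $\{o(fg),o(gf)\}=\{0,1\}$, which requires keeping track of how the decreasing generators interleave the contracting increasing compositions; everything else is routine once this is in hand.
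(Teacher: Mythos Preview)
Your sufficiency argument for (i) matches the paper's: both reduce to Proposition~\ref{case3} after normalizing one of the maps to the form $T(x)=a/(x+a)$ and checking the parameter constraints. For sufficiency of (ii) you take a genuinely different route: the paper applies Proposition~\ref{case1} (equivalently Proposition~\ref{prop11}) to the \emph{increasing} pair $(fg,gf)$, whereas you apply part~(i) of the present proposition to the \emph{decreasing} pair $(f,gfg)$. Both reductions are valid; yours has the minor advantage of staying within the decreasing case, while the paper's makes the role of the increasing compositions $fg,gf$ more transparent.

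On necessity, you flag the trapping step for $\{o(fg),o(gf)\}=\{0,1\}$ as the main obstacle, but it is not actually hard and the paper handles it no more explicitly than you do (it merely says ``similar to Proposition~\ref{prop22}''). Here is the missing descent: with the labeling $f(1)=0$, $g(0)=1$ (so that $fg(0)=0$), choose $\delta>0$ small enough that $x_0,f(x_0),g(x_0)\notin(0,\delta)$, that $(0,\delta)\cap Im(g)=\emptyset$, and that $f^{-1}((0,\delta))\cap Im(f)=\emptyset$. Then any orbit point $y\in(0,\delta)$ realised by a word of length $n\ge 2$ is necessarily of the form $y=fg(w)$ with $w$ an orbit point of word length $n-2$; if $0$ were not attracting for $fg$, then $w<y<\delta$, and descent on $n$ forces an orbit point of length $\le 1$ into $(0,\delta)$, a contradiction. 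The same argument at $1$ handles $o(gf)$. With this in hand your necessity sketch is complete, and your treatment of the ``both onto'' case is in fact more detailed than the paper's.
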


\begin{proof}
Proof of (i) is straightforward and follows from Proposition \ref{case3}. To prove (ii), we note that the pair $(fg,gf)$ satisfies all of the conditions of Proposition \ref{case1}. The proof of the converse of (ii) is similar to the proof of the converse of (ii) in Proposition \ref{prop22}. Finally the proof that both $f$ and $g$ cannot be onto (if some orbit is dense) is similar to the proof given for the same statement at the end of the proof of Proposition \ref{prop22}.
\end{proof}


\begin{thebibliography}{30}


\bibitem{BMS}V. Bergelson, M. Misiurewicz, and S. Senti, Affine actions of a free semigroup on the
real line. \emph{Ergodic Theory Dynam. Systems} \textbf{26} (2006), no. 5, 1285--1305.


\bibitem{BMP}J. Bonet, F. Mart\'{\i}nez-Gim\'enez, and A. Peris, Linear chaos on Fr«echet spaces, Dynamical systems and functional equations (Murcia, 2000). \emph{Internat. J. Bifur. Chaos Appl. Sci. Engrg.} \textbf{13} (2003), no. 7, 1649--1655.


\bibitem{CMP} J. Conejero, V.  M\"uller, and A. Peris, Hypercyclic behaviour of operators in a hypercyclic C0-semigroup. \emph{J. Funct. Anal.} \textbf{244} (2007), no. 1, 342--348.


\bibitem{DSW}W. Desch, W. Schappacher, and G. Webb, Hypercyclic and chaotic semigroups of linear operators. \emph{Ergodic Theory Dynam. Systems} \textbf{17} (1997), no. 4, 793--819.



\bibitem{J1}M. Javaheri, A generalization of Dirichlet approximation theorem for the affine actions on real line. \emph{ J. Number Theory} \textbf{128} (2008), no. 5, 1146--1156.




\bibitem{K} T. Kalmes, Hypercyclic, mixing, and chaotic C0-semigroups induced by semiflows. \emph{Ergodic Theory Dynam. Systems} \textbf{27} (2007), no. 5, 1599--1631.





\end{thebibliography}
\end{document}